\documentclass[11pt]{article}
\usepackage[a4paper,left=0.85in,right=0.85in,top=0.85in,bottom=1.15in]{geometry}

\usepackage{authblk}
\usepackage[square,numbers]{natbib}
\bibliographystyle{abbrvnat}

\usepackage{url}

\usepackage{latexsym,amsmath,amssymb,graphics,amscd}
\usepackage{graphicx}
\usepackage{fancyhdr}
\usepackage{enumitem}
\usepackage{amsthm}
\usepackage{cleveref}
\usepackage{caption}
\captionsetup[figure]{font=small,labelfont=small}

\newtheorem{lemma}{Lemma}[section]

\theoremstyle{definition}

\theoremstyle{remark}
\newtheorem{remark}{Remark}[section]
\newtheorem{example}{Example}[section]

\def\evec{{\bf e}}

\usepackage{marvosym}
\usepackage{enumerate}
\usepackage{enumitem}
 \setlength{\headheight}{15.82394pt}

\usepackage{times}
\usepackage{framed,color}
\usepackage{mdframed}

\usepackage{authblk}

\def\evec{{\bf e}}

\newcommand{\RR}{\mathbb{R}}
\newcommand{\CC}{\mathbb{C}}

\newcommand{\TT}{\mathbb{T}}
\newcommand{\ZZ}{\mathbb{Z}}

\newcommand{\KK}{\mathbb{K}} 
\newcommand{\cF}{\mathcal{F}}
\newcommand{\cL}{\mathcal{L}}
\newcommand{\Aff}{\mathrm{Aff}} 
\newcommand{\GLin}{\mathrm{GL}} 
\newcommand{\Aut}{\mathrm{Aut}} 

\providecommand{\keywords}[1]
{
  \small	
  \textbf{\textit{Keywords---}} #1
}

\title{Dynamical Systems On Generalised Klein Bottles}

\author{Peter Grindrod\thanks{Mathematical Institute, University of Oxford, Oxford, UK; \Letter: grindrod@maths.ox.ac.uk}  \phantom{ } and Ka Man Yim\thanks{School of Mathematics, Cardiff University, Cardiff, UK;  \Letter:   yimkm@cardiff.ac.uk}}


\begin{document}
\maketitle

\abstract{We propose a high dimensional generalisation of the standard Klein bottle, going beyond those considered previously. 
We address the problem of generating continuous  scalar fields (distributions) and dynamical systems (flows) on such state spaces, which can provide a rich source of examples for future investigations. 
 We consider a class of high dimensional dynamical systems that model distributed  information processing within the human cortex, which may be capable of exhibiting some Klein bottle symmetries. We deploy topological data analytic methods in order to analyse their resulting  dynamical behaviour, and suggesting future challenges. }

\keywords{Generalised Klein bottles; distributions and flows with Klein bottle symmetries; information processing within mammalian brains; topological data analysis }

\section{Introduction}
Data science typically presents observations (in the form of data) from some implied dynamical system of interest and demands that we character the underlying relevant processes and system behaviour. This includes differentiating one source from another, and other related questions regarding their observed state variables' mutual dependencies and mutual information. 

Here we consider the problem of characterising  finite dimensional manifolds that contain such attractors. The most basic problem is to consider a wide range of the possible finite dimensional manifolds which may be present. This leads us naturally to ask what kind of compact closed manifolds exist and whether we may recognise scalar fields (smooth distributions and potentials) or vector fields (flows) flows defined over them, respecting  their inherent topologies.

For low dimensional state spaces there is a limited set of possible compact closed manifolds. In two dimensions, for example,  we have the 2-sphere, the 2-torus, the Klein bottle and the real projective plane. In higher dimensions these manifolds may generalise to other objects. Generalising spheres and tori is straightforward. 

In this paper we introduce a generalisation of the Klein bottle, going beyond those given previously~\cite{Davis2019AnBottle}. In $k$ dimensions this relies on a partition of independent coordinates into $k_1$ periodic coordinates that may be flipped, or swapped (as if from moving clockwise to moving anticlockwise, for example), and $k_2$ active periodic components which cause the various flips (such that $k=k_1+k_2$). 

By doing so we access a wide range of compact closed manifolds over which we may define, or there may be discovered, dynamical systems. 

We introduce an inverse problem in the form of the  observation of large scale spiking system, including those carrying out information processing within the cortex of the  human brain~\cite{Yamazaki2022SpikingReview},  where the dimension of the attractor may be estimated yet the topological nature of the attractor is elusive. Such problems may become common as high throughput science enables the complete observation of such large scale dynamical systems, which appear to approach unknown attractors of relatively high dimension, that require characterization. Working from observed time series, such as the spike trains as here, requires an analytical pipeline.  The very high state space dimension of such a complex system means that the dimension and topology of the  manifold holding the attractor is essential in gaining an understanding of {\it what is occurring} and {\it how the system responds} to incoming stimuli.

The detailed and rigorous mathematical considerations are deferred to the Appendices.

\section{Generalised Klein Bottles}

For $k_1\ge 1$  and  $k_2\ge 0$ let $B=(B_{i,j})$ denote an irreducible  binary $k_1 \times k_2$ matrix.

Consider $\RR^{k_1+k_2}$  with independent  coordinates,  $x=(x_1, x_2...,x_{k_1})^T$ and    $y=(y_1, y_2...,y_{k_2})^T$,  subject to the following set of  symmetries:
\begin{equation}
(x+a,y+b ) \sim (\varphi(b)x,y)), \ \ (a,b)\in \ZZ^{k_1}\times \ZZ^{k_2}, \label{summy}  
\end{equation}
where $\varphi(b):\ZZ^{k_2} \to {\rm Aut}(\ZZ^{k_1})$ is a $b$-dependent automorphism  defined by the Hadamard (elementwise) product
$$\varphi(b)x=(-1)^{B.b}\odot x = \left((-1)^{(B.b)_1}x_1, \ldots,  (-1)^{(B.b)_{k_1}}x_{k_1} \right)$$
and extended to $\RR^{k_1}$.
So, depending on the parity of the $i$th term of the vector $(-1)^{B.b}$, the sign of the $i$th coordinate, $x_i$, is either flipped (reflected) or else left unchanged.

The symmetries in (\ref{summy}) imply the quotient space is the cube, $[0,1]^{k_1+k_2}$, with no boundaries and endowed locally with the Euclidean metric. We refer to the $x_i$ coordinates  as {\it toroidal coordinates},  since they are 1-periodic. We refer to the $y_j$ coordinates  as {\it Klein coordinates} since  they are 1-periodic and when incremented by 1 they also flip, or reflect, some of the $x_i$ toroidal coordinates (those for which $B_{i,j}=1$). The result is a {\bf generalised Klein bottle}, which we shall denote by $\KK(k_1,k_2,B)$. 

 $B$ specifies  a bipartite graph  indicating which of the toroidal  coordinates are flipped  by which of the Klein coordinates, see Figure \ref{bip}.  The 
 adjacency matrix, $A$, for the associated bipartite graph is given in block matrix form by 
$$
A=\left(
\begin{array}{cc}
0&B\\ 
B^T&0\\ 
\end{array}
\right).
$$
By assumption on $B$ this bipartite graph is strongly connected so that it cannot be decomposed into disjoint connected graphs (since in that case the  space would be a Cartesian product of at least two lower dimensional spaces). Necessarily, every Klein  coordinate  flips at least one toroidal  coordinate, and that every  toroidal coordinate is flipped by at least one Klein  coordinate (for if this were not the case we may discard any  marooned coordinates).  
\begin{figure}[ht]
\centering
\includegraphics[width=0.95\textwidth]{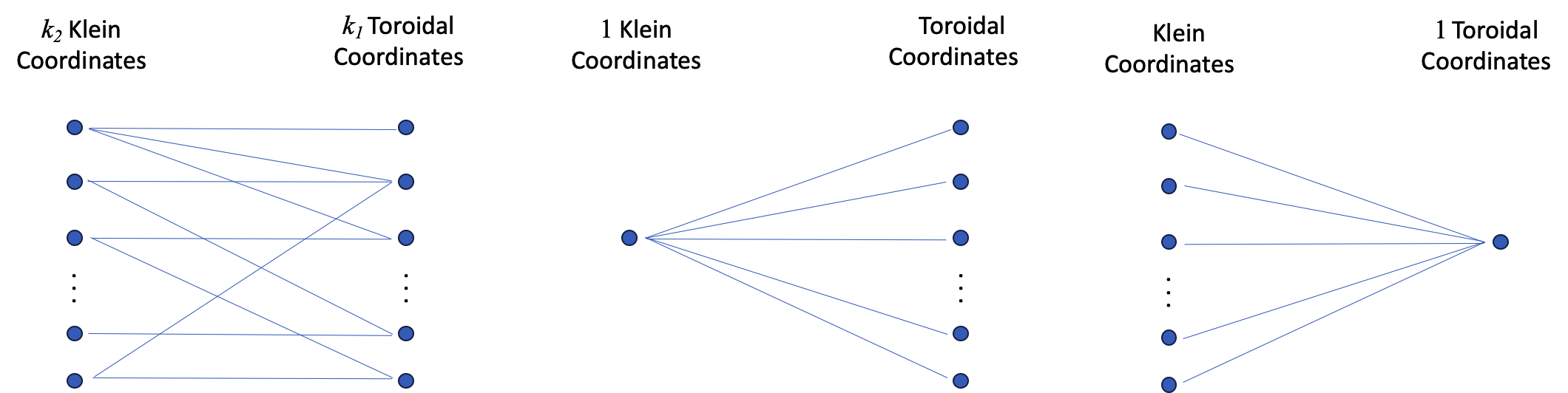} 
\caption{Left: A space $\KK(k_1,k_2,B)$ is specified by a  bipartite graph indicating which of the $k_1$ toroidal coordinates is flipped by which of the $k_2$ Klein coordinates. It must be strongly connected  (irreducible). Centre:  $\KK(k_1,1,B)$ as defined in \cite{Davis2019AnBottle} (with only one possibility for $B$). Right: $\KK(1,k_2,B)$ (again with only one possibility for $B$). }
\label{bip}
\end{figure}

If $k_1=k_2=1$ then we have the standard Klein bottle ($B=(1)$ in that case), a closed non-orientable two-dimensional surface. If $k_2=0$ then $\KK(k_1,0,.)=\TT^{k_1}$ the $k_1$-dimensional torus.
 For spaces $\KK(1,k_2,B)$, for $k_2\ge 2$,  there is but one valid choice of bipartite graph, $B$. Those spaces  are  distinct from the Klein bottle generalisations previously considered in \cite{Davis2019AnBottle} (and its descendants), which are  $\KK(k_1,1,B)$, for some $k_1\ge 2 $, where again there is but one valid choice for $B$, see Figure \ref{bip}.

When two Klein variables only affect the same toroidal variable there is  a {\it hidden} torus. Let $\evec_i$ denote the unit vectors in the direction of the $i$th Klein coordinate, in $\RR^{k_2}$.
 Consider, for example, $\KK(1, n,B)$ for $n\ge 1$. For any distinct pair of the $n$ Klein coordinate symmetries, say those those in $y_i$ and $y_j$ (where without loss of generality  $i < j$), we may deduce
$$
\begin{array}{c}
(x,...y_i+1,...,y_j+1,... ) \sim (x,...y_i,...,y_j ,... ), \\
(x,...y_i-1,...,y_j+1,... ) \sim (x,...y_i,...,y_j ,... ).
\end{array}
$$
Hence there is a 2-dimensional torus, $\TT^2, $ with coordinate directions $(\pm \evec_i +\evec_j)/\sqrt{2}$, and periodicities $ \sqrt{2}$, within the $(y_i,y_j)$-subspace of $\KK(1, n,B)$. 

An inference of the this type  will be true more generally within  $\KK(k_1,k_2,B)$, provided that the subsets of the toroidal coordinates that are  flipped by each of a pair $y_i$ and $y_j$ (and hence the $i$ and $j$th columns of $B$) are identical. 
 Such a situation might occur in scenarios where there is some symmetry within the application, meaning that some of that state variables which are represented by   Klein coordinates are permutable (exchangeable) and thus may cause similar effects (transformations of finite order) on a common subset of the toroidal coordinates. More generally, this  type of hidden symmetry  is  a consequence of the dimesion of the range of $B$, spanned by its columns, is strictly less than $k_2$.

\section{More Symmetries}

For the general case of $\KK(k_1,k_2,B)$, we may rewrite this to consider $B\in \ZZ^{k_1\times k_2 }$,  and  also set  $\varphi : \ZZ^{k_2} \to {\rm Aut}(\ZZ^{k_1})$. Since ${\rm Aut}(\ZZ^{k_1}) = \GLin(k_1, \ZZ)$ is a subgroup of $\GLin(k_1, \RR)$, we define an action on $\RR^{k_1}$, via matrix multiplication:
\begin{equation}
\varphi(b)x=H(b). x \ \ {\rm for}\ b\in \ZZ^{k_2}
\ \ {\rm and\  all}\  x\in \RR^{k_1}, \label{diggy1}    
\end{equation}
 where we introduce  the diagonal matrix (containing diagonal elements in $\{-1,1\}$) given by
\begin{equation}
H(b)\equiv {\rm Diag}\{ (-1)^{B.b} \}  \label{diggy2}   \end{equation}
Then  $H(b).x \equiv (-1)^{B.b} \odot x$ (as before). 

So far  $\varphi$ is subordinate to the choice of $B$. Yet it might be generalised further. Consider a group automorphism $\varphi: \ZZ^{k_2} \to \mathrm{Aut}(\ZZ^{k_1})$ whose image subgroup in $\mathrm{Aut}(\ZZ^{k_1})$ is of finite order. The automorphism group $\mathrm{Aut}(\ZZ^{k_1})$  is the general linear group $\GLin (k_1;\ZZ)$; that is, the group of $k_1\times k_1$ matrices with integer entries, and determinant $\pm 1$. The image of $\varphi$ is an abelian subgroup  of $\GLin (k_1;\ZZ)$, and the homomorphism can be represented by picking $k_2$ matrices $ M_1, \ldots, M_{k_2} \in \GLin (k_1, \ZZ)$ that commute with each other:
\begin{equation}
 \varphi(b) = H(b) \ \   {\rm where}\  H(b)=M_1^{b_1}. \cdots M_{k_2}^{b_{k_2}}.\label{diggy3}    
\end{equation} 
 In the diagonal special case for (\ref{diggy3}),  given by (\ref{diggy1}) and (\ref{diggy2}) and leading to the space $\KK(k_1,k_2,B)$, we have
\begin{equation}
H(b) = D_1^{b_1}. \cdots .D_{k_2}^{b_{k_2}}, \label{diggy4}
\end{equation}
where each $D_i$ is a diagonal matrix with a sequence of $\pm 1$'s down the diagonal. Equation (\ref{diggy4}) represents the  natural factorisation of $H(b)$  as given in (\ref{diggy2}). 

We let  $\KK(k_1,k_2;\varphi)$ denote the space defined by $\RR^{k_1}\times\RR^{k_2}$ modulo the following equivalence relation: for any pair of integers $(a,b)\in \ZZ^{k_1}\times \ZZ^{k_2}$;
\begin{equation} \label{eq:equiv_klein}
    (x+a,y+b) \sim (\varphi(b) x+a, y+b).
\end{equation}
This equivalence relation is in fact the quotient of $\RR^{k_1}\times\RR^{k_2}$ by action of the semi-direct product $G = \ZZ^{k_1} \rtimes_\varphi \ZZ^{k_2}$, on which multiplication is given by 
$$(a',b')\diamond(a,b) \sim (\varphi(b')a +a', b+b').$$ The group $\ZZ^{k_1} \rtimes_\varphi \ZZ^{k_2}$ acts on $\RR^{k_1} \times \RR^{k_2}$ in the following manner:  
\begin{equation}
    (a,b) \cdot(x,y) = ( \varphi(b)x+a, y+b),
\end{equation}
One can easily verify this action is compatible with the group multiplication $\diamond$.

We make some observations about the group action. The action is free:  $(a,b) \cdot (x,y) = (x,y)$ if and only if $(a,b) = (0,0)$, the identity element in $G$. Furthermore, because any non-identity element of $G$ changes of the coordinates of $(x,y)$ by a non-zero, integer amount, the action is a covering space action on $\RR^{k_1} \times \RR^{k_2}$, and the covering of $\RR^{k_1} \times \RR^{k_2}$ of $\KK$ is a regular covering. 

\begin{example}\label{ex:flip}
    We given an example where the image of $\varphi$ in (\ref{diggy3}) is not the subgroup of diagonal matrices. 
For $k_1 = 2$ and $k_2 = 1$; we can have the following homomorphism $\varphi: \ZZ \to \GLin (2,\ZZ)$, where 
\begin{equation}
    \varphi(b)x \equiv H(b).x \  \ {\rm where}\  H(b) = \begin{pmatrix}
0 & 1\\
1 & 0
\end{pmatrix}^b.	
\end{equation}
The action of $(a_1, a_2, b)$ on $(x_1, x_2, y)$ is then given by 
\begin{equation}
\begin{pmatrix}
    x_1 \\
    x_2\\
    y
\end{pmatrix} \mapsto \begin{pmatrix}
    \begin{pmatrix}
0 & 1\\
1 & 0
\end{pmatrix}^b \begin{pmatrix}
x_1 \\ x_2
\end{pmatrix} \\ y 
\end{pmatrix}
    	+ \begin{pmatrix}
a_1 \\ a_2 \\ b
\end{pmatrix} 
\end{equation}
Note that in this case the parity of $b$ controls whether the $x_1$
 and $x_2$ coordinates are swapped (i.e. a reflection across the diagonal of the first two components); whereas previously it controlled the reflection of one or more component coordinates. 
\end{example}

\subsection{Reduction of Symmetries} 
Since the equivalence relations on $ \RR^{k_1} \times \RR^{k_2}$  is due to a quotient by a group action, it suffices to write down the equivalence relations generated by the generators of the group, as they imply all of the other equivalence relations. Since the group $\ZZ^{k_1} \rtimes_\varphi \ZZ^{k_2}$ is generated by the set $\{(e_i, 0), (0, e_j)\}$ where $e_i$'s and $e_j$'s are unit coordinate vectors in $\ZZ^{k_1}$ and $\ZZ^{k_2}$ respectively, we can express the equivalence relations in \cref{eq:equiv_klein} as follows: for all $(x,y) \in \RR^{k_1} \times \RR^{k_2}$,
\begin{equation}
    (x,y)  \sim(x +e_i, y)\ \text{and}\ (x,y)  \sim(\varphi(e_j)x , y+ e_j).
\end{equation}
We can also choose the set of generators to be those that generate the kernel and co-image of $\varphi$. Consider in particular the diagonal case \cref{diggy2}, where
\begin{align*}
   \varphi(b) = {\rm Diag}\{ (-1)^{B.b} \}. 
\end{align*}
Regardless of the choice of $B$, if $b \in (2\ZZ)^{k_2}$, then $\varphi(b) = 1$. We now analyse how the choice of $B$ affects the action. Since $(-1)^{2(B.b)} = 1$ as well, we can trivially write 
\begin{align*}
   \varphi(b) = {\rm Diag}\{ (-1)^{(B.(b\ \mathrm{mod}\ 2))
   \ \mathrm{mod} \ 2} \}. 
\end{align*}
In other words, we regard the binary matrix $B$ as a linear transformation of $\ZZ_2$-vector spaces.
The image of $\varphi$ are $k_1$ dimensional diagonal matrices with $\pm 1$ entries. The set of such matrices forms a subgroup of $\GLin(k_1, \ZZ)$  isomorphic to $\ZZ_2^{k_1}$. The isomorphism is given by
\begin{align}
    \exp: c \in\ZZ_2^{k_1}\mapsto {\rm Diag}\{ (-1)^c \}.
\end{align}
Thus we can express $\varphi$ as the composition that factors through 
\begin{equation}
    \varphi: \ZZ^{k_2} \xrightarrow{\mathrm{mod}\ 2} \ZZ_2^{k_2} \xrightarrow{B} \ZZ_2^{k_1} \xrightarrow[\cong]{\exp}\ZZ^{k_1}_2.
\end{equation}
The kernel $\ker \varphi$ consists of even integers, and  integers (mod 2) in  the kernel of $B$; the image of $\varphi$ is isomorphic to the image of $B$ in $\ZZ_2^{k_1}$. Computationally, the basis vectors of the kernel and image of $B$ can be easily inferred by performing Gaussian elimination on matrices derived from $B$ over the field $\ZZ_2$. Let $r \leq k_2$ be the rank of $B$, and suppose columns $j_1,\ldots, j_r$ of $B$ form a basis for the image of $B$. We can express the equivalence relations as 
\begin{align}
     (x,y)  &\sim(x +e_i, y)   \\
     (x,y)  &\sim(x, y + 2e_j)   \\
     (x,y)  &\sim( x, y+ c) \quad c \in \ker B\\
     (x,y)  &\sim((-1)^{B_{j_k}} \odot x, y+ e_{j_k}) \quad k = 1,\ldots, r. 
\end{align}
Note that $c$ is a vector in $\ZZ_2^{k_2}$, realised as a real vector with $\{0,1\}$ entries in $\RR^{k_2}$. The first two symmetry conditions is  a \emph{toroidal} symmetry on the whole space. The latter two equivalence relations can be expressed as a $\ZZ_2^{k_2}$ action on the torus obtained by imposing the first two equivalence relations on $\RR^{k_1} \times \RR^{k_2}$. We expound on this point of view in \Cref{appA}.

\begin{example}
    Consider the full coupling case, where $B$ is the $k_1 \times k_2$ binary matrix with entries all equal to one. The kernel of $B$ is spanned by $(k_2-1)$ $\ZZ_2$-vectors 
    $$(1,1,0,\ldots, 0),(0,1,1,0,\ldots, 0),\ldots, (0,\ldots, 1,1),$$ 
    and the image of $B$ is simply the vector $(1, \ldots, 1)$. Thus the equivalence relations are generated by 
    \begin{align*}
        (x,y)  &\sim(x +e_i, y)   \\
         (x,y)  &\sim(x, y + 2e_j)   \\
        (x,y)  &\sim( x, y+ e_i + e_{i+1}) \quad i= 1,\ldots, k_2 -1 \\
        (x,y)  &\sim(-x, y+ e_{1}). 
    \end{align*}
\end{example}

\subsection{Functions on Klein Bottles}
We can specify scalar functions on $\KK(k_1, k_2, B)$ uniquely with functions  $F : \RR^{k_1 + k_2} \to \RR$ that satisfy the condition 
\begin{equation}\label{eq:general_symmetry_f}
    F(x,y) = F((a,b) \cdot (x,y)) = F( \varphi(b)x+a, y +b),
\end{equation}
for all $(a,b) \in G = \ZZ^{k_1} \rtimes_\varphi \ZZ_{k_2}$.
Because $\KK(k_1, k_2, B)$ is a quotient of $q: \RR^{k_1 + k_2} \twoheadrightarrow \KK(k_1, k_2)$ any function $f: \KK \to \RR$ admits a lift $F = f \circ q: \RR^{k_1 + k_2} \to \RR$ to a function on the covering space. Alternatively, since $\KK(k_1, k_2)$ is the orbit space of the group action of $\ZZ^{k_1} \rtimes \ZZ^{k_2}$ on $\RR^{k_1 + k_2}$, any function $F$ that is constant on the orbits descends to a function $f$ on $\KK(k_1, k_2)$, such that $F = f \circ q$.

We begin by demonstrating a simple, practical,  method of construction for a wide range of functions, $F$, with the desired properties.

Consider $\KK(k_1,k_2,B)$, as before. Suppose that we for each $i=1,...,k_1$ have  two non-negative smooth functions, $T_{i,1}(y)$ and $T_{i,2}(y)$,  defined  for $y\in \RR^{k_2}$, such that 
$$T_{i,2}(0)=1 \ \ {\rm and} \ \  T_{i,2}(0)=0,$$
and if $B_{i,k}=1$ then $T_{i,1}(y)$ and $T_{i,2}(y)$  exchange their values each time $y_k$ is incremented by 1. Clearly $T_{i,1}(y)$ and $T_{i,2}(y)$ must be 2-periodic in all arguments.  
\label{ANZ}

Then for $b\in \ZZ^{k_2}$, we have
\begin{align}
    (T_{i,1}(y+b), T_{i,2}(y+b))
    = \begin{cases}
    (T_{i,1}(y), T_{i,2}(y))\ \  {\rm if}\  (B.b)_i \ {\rm is\ even,}\\
  (T_{i,2}(y), T_{i,1}(y))\ \  {\rm if}\  (B.b)_i \ {\rm is\ odd.}
\end{cases}\nonumber \end{align}

Consider the following ansatz:
  \begin{equation}
F(x,y)=  
\prod_{i=1}^{k_1} \left( T_{i,1} (y)
f_{i}(x_i) +
  T_{i,2} (y) f_{i}(1-x_i) \right)\  \label{anz}  
\end{equation}
Here the $f_{i}(x_i) $, for $i=1,...,k_1$, are arbitrary   smooth 1-periodic functions; and the pair $T_{i,1}(y)$, and $T_{i,2}(y)$  switch, or interpolate, the corresponding term in the product between $f_i(x_i) $ and its flipped form $f_i(1-x_i) $ as the $y_k$s are incremented. Hence,  being constant along orbits,  $F(x,y)$ descends to a function  defined on 
on $\KK(k_1,k_2,B)$.  It is  separable, and such that, at $y=0$,   we have 
 $F(x,0)=\prod_{j=1}^{k_1} f_{j}(x_j).$

The switching functions, $T_{i,1}(y)$, and $T_{i,2}(y)$, depend on $B$ and may be defined as follows. First consider $\KK(k_1,k_2, B)$, and define the function $S_i(y) $, depending on the $i$th row of $B$: 
$$S_i(y)\equiv \prod_{k=1}^{k_2} 
\left(
\frac{-(1-\cos\pi {y_k})}{2 } 
+
\frac{(1+\cos\pi {y_k})}{2 }  
\right)^{B_{i,k}}, \ \ i=1,...,k_1$$
This product of sums expression for $S_i$ may be  expanded out as  a sum of product terms, with each term containing a product of exactly $k_2$ factors. Then we may define $T_{i,1}(y)$ to be  the sum of all the positive terms within that sum (those terms having an even number of negative factors each of the form $\{-(1-\cos\pi {y_k})\}$ for which  $B_{i,k}=1$), and $-T_{i,2}(y)$ to be  the sum of all the negative terms within that  sum (those terms having an odd number of negative factors each of the form $\{-(1-\cos\pi {y_k})\}$ for which  $B_{i,k}=1$).  
Then, by definition,  $T_{i,1}$, and $T_{i,2}$ are 2-periodic in all of their arguments, and we  have 
$$  S_i(y)=T_{i,1}(y)-T_{i,2}(y),$$ expressing $S_i$ as the difference between two  non-negative continuous functions, where  $S_I(0)=T_{i,1}(0)=1$ and $T_{i,2}(0)=0$. 

Note that we may also write 
$$S_i(y)=\prod_{k=1}^{k_2} 
(\cos \pi y_k)^{B_{i,k}}
.$$
Hence, when  each of the $y_k$'s for which $B_{i,k}=1$ are  independently incremented  by 1 then $S_i$ must change sign, and consequently $T_{i,1}$ and $T_{i,2}$ must exchange their values. The pair thus subsume the Klein symmetries of $\KK(k_1,k_2,B)$.

Defined in this way or otherwise, the pair $T_{i,1}$ and $T_{i,2}$ imply that the function $F$, given by the ansatz in (\ref{anz}), descends to a scalar field on $\KK(k_1,k_2,B)$, as required

In \Cref{appA} we consider the necessary and sufficient conditions for scalar fields satisfying the symmetry condition $F(z) = F(g \cdot z)$, and derive a Fourier basis for such scalar fields. The condition is expressed as $F$ being in the kernel of a linear operator $\mathcal{L}$ on scalar fields, which we describe in \cref{eq:operator_scalar}. This operator can be interpreted as a graph Laplacian; the technical details are briefly summarised in \Cref{rmk:laplacian}.

Using the Fourier transform (which is natural given the periodicities), the symmetry requirements imposed on $F$ imply constraints on the Fourier coefficients of $F$. These constraints are expressed once again linearly: the coefficients are in the kernel of an operator $\mathcal{L}^\star$ on the vector space of Fourier coefficients, which is dual to $\mathcal{L}$ (\cref{eq:dual_operator_scalar}). By taking the inverse Fourier transform on coefficients in the kernel of $\mathcal{L}^\star$, we can find a Fourier basis for scalar functions that satisfy the symmetry constraints.
\begin{example}
     We plot some Fourier modes of scalar fields on the standard Klein bottle in \Cref{fig:fourier_modes_scalar} as an illustration. These are formed by linear combinations of Fourier basis functions of the form
     \begin{align*}
         \cos(2\pi \lambda x) \cos(\pi \zeta y + \phi) \quad &\quad \lambda \in \ZZ_{\geq 0}, \zeta \ \text{even} \\
         \sin(2\pi \lambda x) \cos(\pi \zeta y + \phi) \quad &\quad \lambda \in \ZZ_{\geq 0}, \zeta \ \text{odd}.
     \end{align*}
     These are derived in \Cref{ex:standard_klein} in \Cref{appA}. We can verify that these functions satisfy the symmetry conditions: for integers $a,b$, if $\zeta$ is even:
    \begin{align*}
         \cos(2\pi \lambda ((-1)^b x + a)) \cos(\pi \zeta (y +b) + \phi) &=  \cos(2\pi \lambda x) \cos(\pi \zeta y + \phi).
    \end{align*}
    On the other hand, if $\zeta$ is odd,
    \begin{align*}
         \sin(2\pi \lambda ((-1)^b x + a)) \cos(\pi \zeta (y + b) + \phi) &= (-1)^{b(\zeta + 1)} \sin(2\pi \lambda x)  \cos(\pi \zeta y + \phi) \\
         &=   \sin(2\pi \lambda x) \cos(\pi \zeta y + \phi).
     \end{align*}
     The calculations in \Cref{ex:standard_klein} show that these functions form a basis for any function admitting a Fourier transform while satisfying the symmetry conditions for the Klein bottle. 
\end{example}

\begin{figure}[h]
    \centering
    \includegraphics[width=0.85\linewidth]{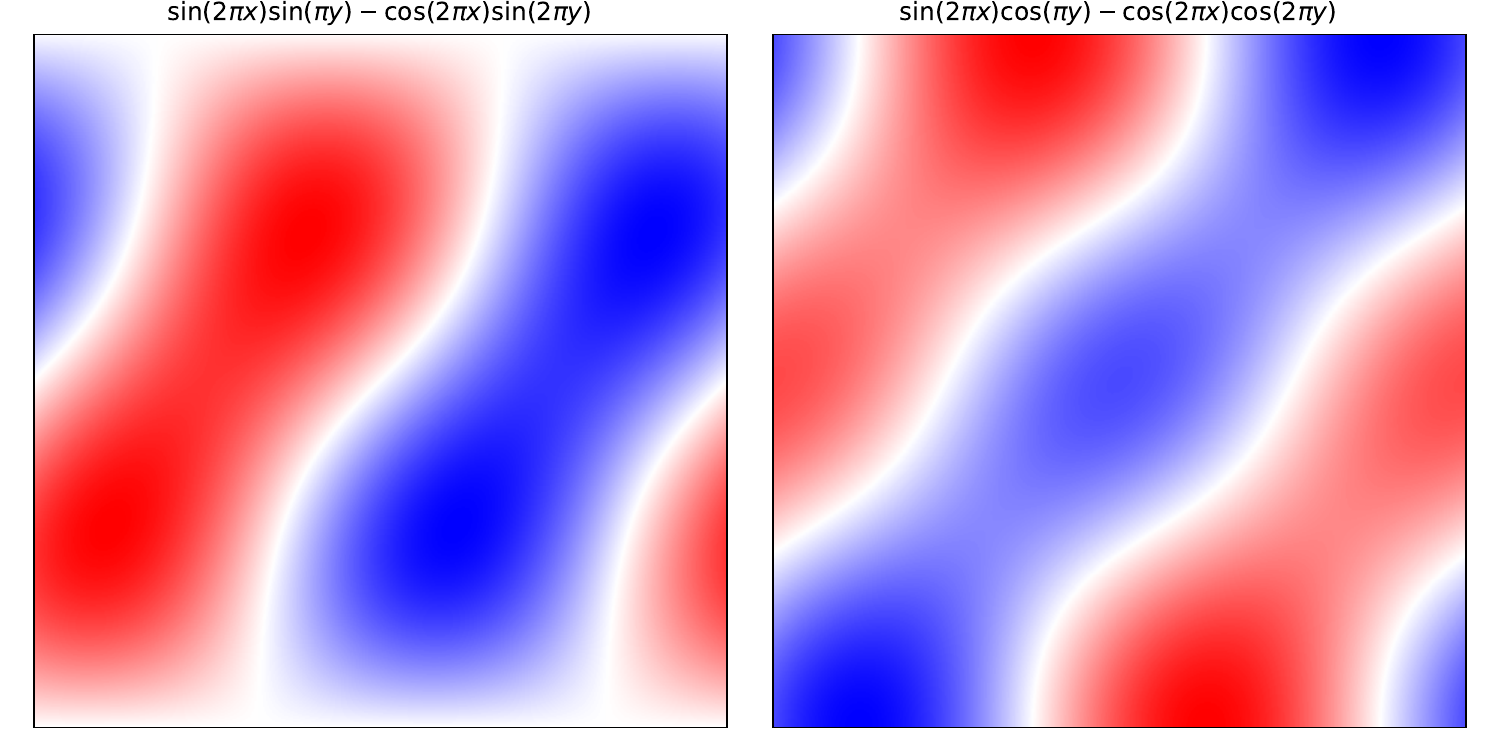}
    \caption{Examples of two scalar fields on the standard Klein Bottle $\KK(1,1)$. The Klein bottle is illustrated here as a unit square domain where the top and bottom sides are identified with opposite orientation, and the left and right hand are identified with the same orientation. The scalar fields are written as linear combinations of Fourier bases functions compatible with the group action on $\RR^2$ that has the Klein bottle as the quotient. The heat maps of these functions are coloured such that white indicates where the function is zero, and the spectrum from red, white to blue goes from positive to negative. }
    \label{fig:fourier_modes_scalar}
\end{figure}

\subsection{Vector Fields on Klein Bottles}
In this section we wish give to give a parameterisation for continuous vector fields on the Klein bottle,  $\KK(k_1,k_2,B)$. Unlike Euclidean space, where vector fields can be expressed by any smooth maps $\RR^{d}$ to $\RR^{d}$,  on a  Klein bottle we may transport a local coordinate frame around one of the non-contractible loops of $\KK(k_1,k_2,B)$ that  flips it. In concrete terms, the non-orientability prevents us  from expressing vectors at different points on $\KK$ with a consistently defined set of global coordinates: we require a flip. In contrast, on the flat torus we may indeed parallel transport a local coordinate frame from one point to any other point, and express vectors at different locations with the a coordinate system that is periodic in each toroidal coordinates.

However, we can lift (pullback) vectors on the Klein bottle to its universal covering space $\RR^{k_1+k_2}$, where we have a global $(x,y)$ coordinate frame. By matching the origin of $\RR^{k_1+k_2}$ and the coordinate system to the local coordinate system of a point in $\KK(k_1,k_2,B)$, the explicit coordinate expression of the vector field lifted in $\RR^{k_1+k_2}$ should reflect the change in the orientation of the coordinate system in $\KK$. We derive the following symmetry condition on lifted vector fields in \Cref{appB}. Writing the lifted vector field $V$ in components $V = (X,Y)$ where $X \in \RR^{k_1}$ and $Y \in \RR^{k_2}$, we have for $(a,b) \in \ZZ^{k_1} \rtimes \ZZ^{k_2}$
\begin{align}
    (-1)^{B.b} \odot  X(x,y) &= X((-1)^{B.b} \odot x + a,y + b) \\
    Y(x,y) &= Y((-1)^{B.b} \odot x + a,y + b) . 
\end{align}
We begin by demonstrating this possibility via a simplifying ansatz (that subsumes the required Klein symmetries above). We provide a more exhaustive and abstract approach in \Cref{appB}.


{ 
First, consider $\KK(k_1,k_2, B)$, and let $T_{i,1}(y)$ and $T_{i,1}(y)$ be defined as before, in Section \ref{ANZ}.

Then consider the following ansatz:
\begin{equation}
X_i(x,y)= \left( T_{i,1} (y)
 f_{i,i}(x_i) -
  T_{i,2} (y) f_{i,i}(1-x_i) \right)  
\prod_{j=1 \  j\ne i}^{k_1} \left( T_{j,1} (y)
 f_{i,j}(x_j) +
  T_{j,2} (y) f_{i,j}(1-x_j) \right)   \label{oma1}  
\end{equation}
  for $i=1,...,k_1;$
  \begin{equation}
Y_i(x,y)=  
\prod_{j=1}^{k_1} \left( T_{j,1} (y)
 g_{i,j}(x_j) +
  T_{j,2} (y) g_{i,j}(1-x_j) \right)\  \label{oma2}  
\end{equation}
for $i=1,...,k_2.$
  
The form in (\ref{oma1}) allows for any of the $x_j$-coordinate dependencies of $X_i$  to be reversed by suitably  incrementing the appropriate $y$-coordinates, and also induces a change of sign in $X_i$ whenever   $x_i$  is reversed.  The form in (\ref{oma2}) allows for the $x_j$-coordinate dependencies of $Y_i$  to be reversed by suitably incrementing the appropriate $y$-coordinates.

The  vector field given in (\ref{oma1}) and (\ref{oma2}) respects the appropriate symmetries for $\KK(k_1,k_2,B)$, which are inherited from fact of  the swapping of the $T_{i,1}$ and $T_{i,2}$ values as appropriate coordinates $y_k$ are increment by 1. Hence it defines a separable vector field over $\KK(k_1,k_2,B)$.

We may of course linearly combine similar separable fields to obtain more general flows on $\KK(k_1,k_2,B)$. This ansatz establishes existence and provides an accessible way to generate fields.

}

In Figures \ref{flo} and \ref{flo3D} we depict the streamlines for two example flows that were  generated in this manner, on $\KK(1,1,(1))$ and $\KK(1,2,(1,1))$ respectively.

\begin{figure}[htbp]
\centering
\includegraphics[width=0.48\linewidth]{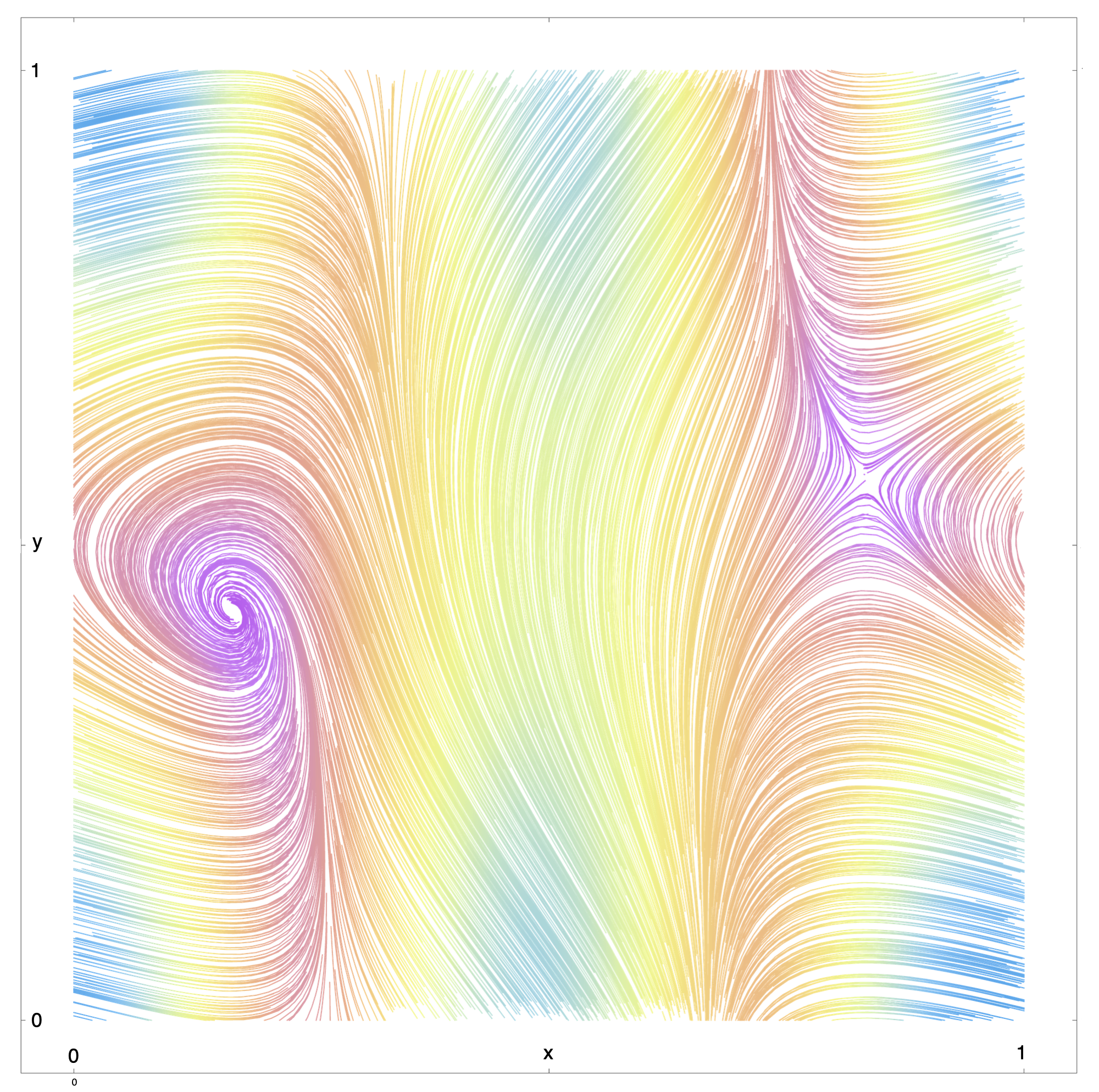}
\includegraphics[width=0.48\linewidth]{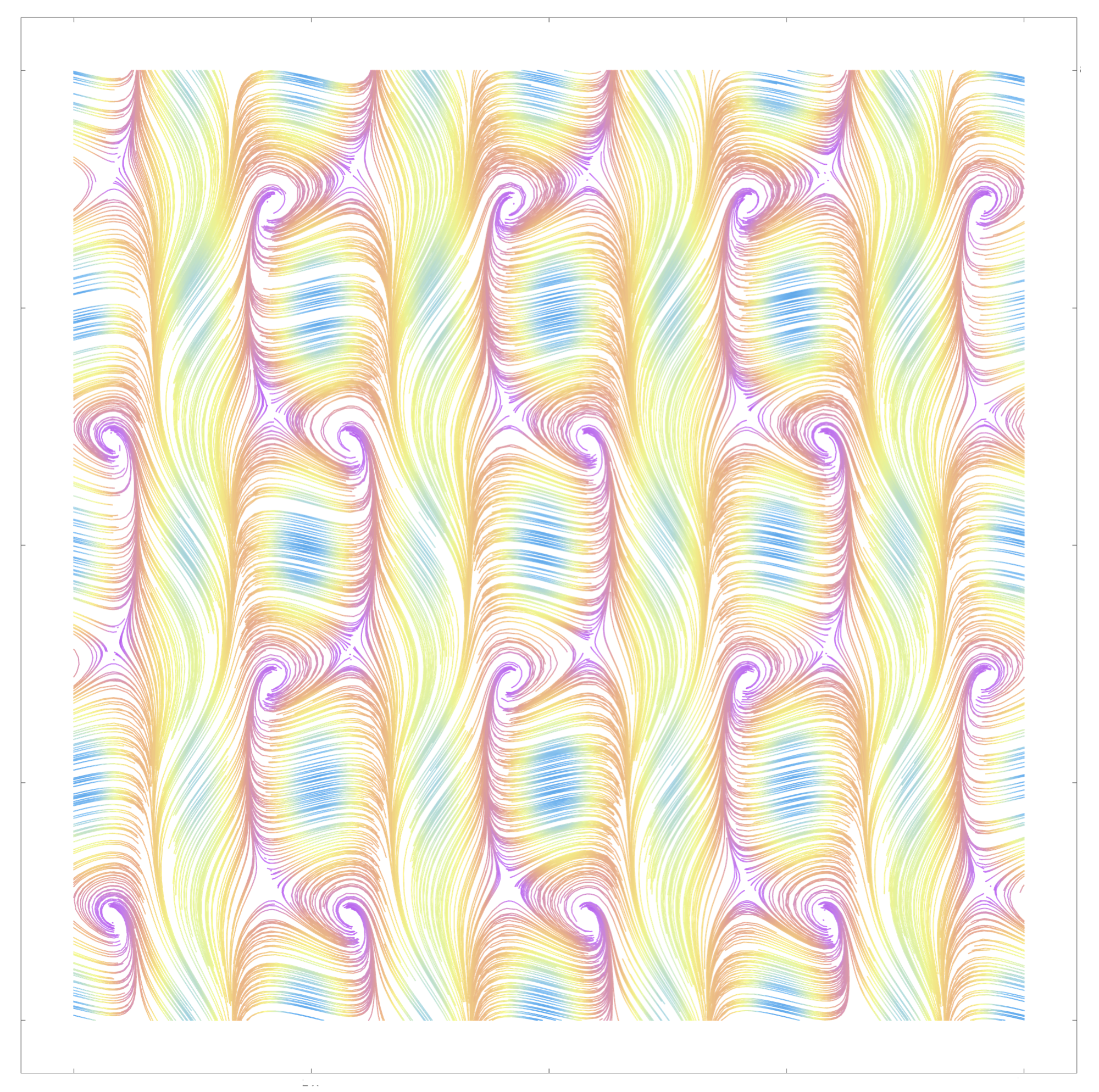}
\caption{Left: Streamlines for an example  flow $(X(x,y),Y(x,y))^T$ on  $\KK(1,1,(1))$, the  standard Klein bottle, with a saddle point and stable focus. Right: the same flow showing the  Klein bottle symmetries on an extended part of the  plane, $[0,4]^2$: the flow is 1-periodic in the toroidal coordinate, $x$, and 2-periodic in the Klein coordinate, $y$, due to the flip symmetry. Streamlines coloured by norm of the vector field (light blue/fast, through yellow then orange then purple/slow).}
\label{flo}
\end{figure}

\begin{figure}[htbp]
\centering

\includegraphics[width=0.48\linewidth]{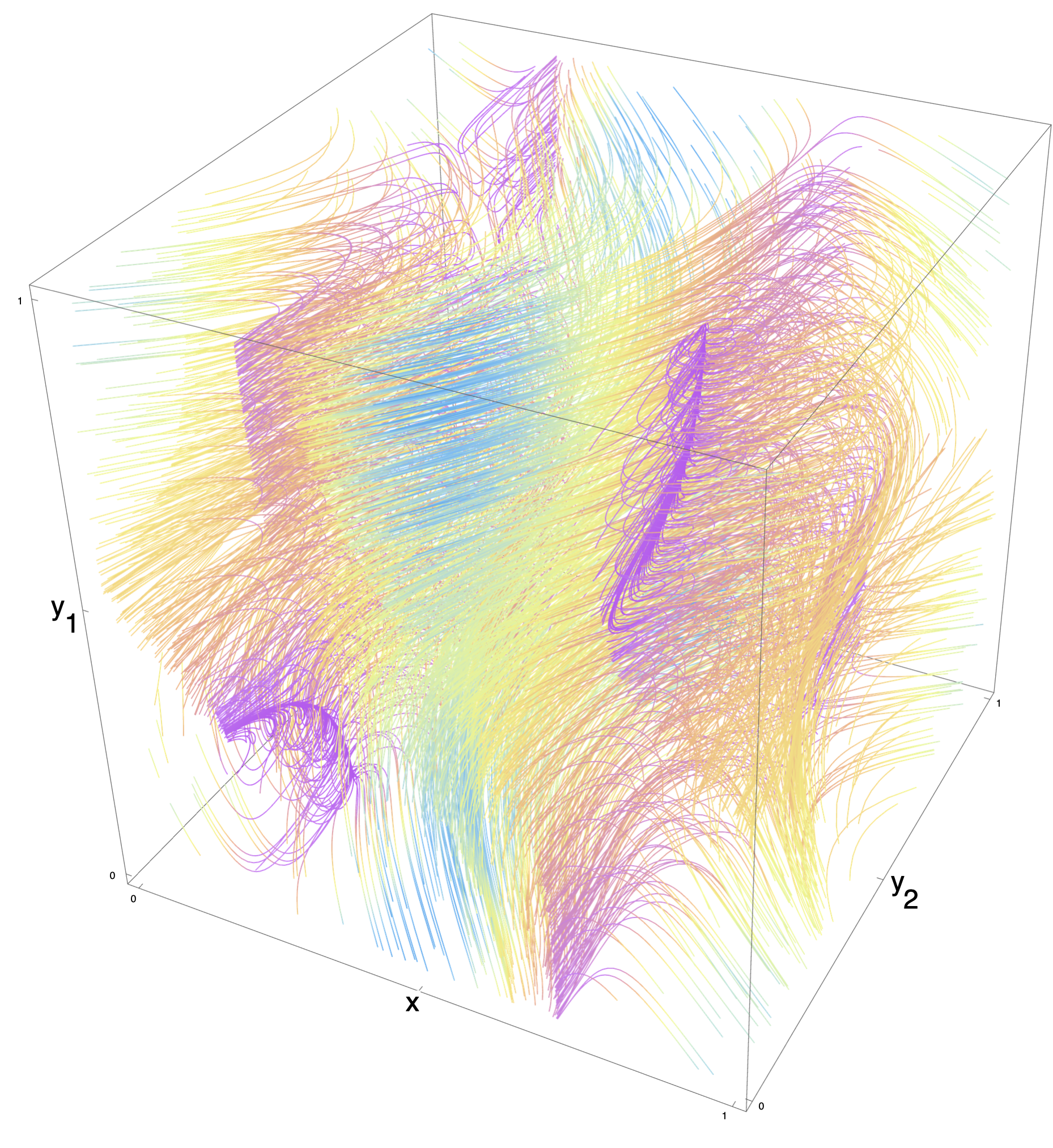}
\includegraphics[width=0.48\linewidth]{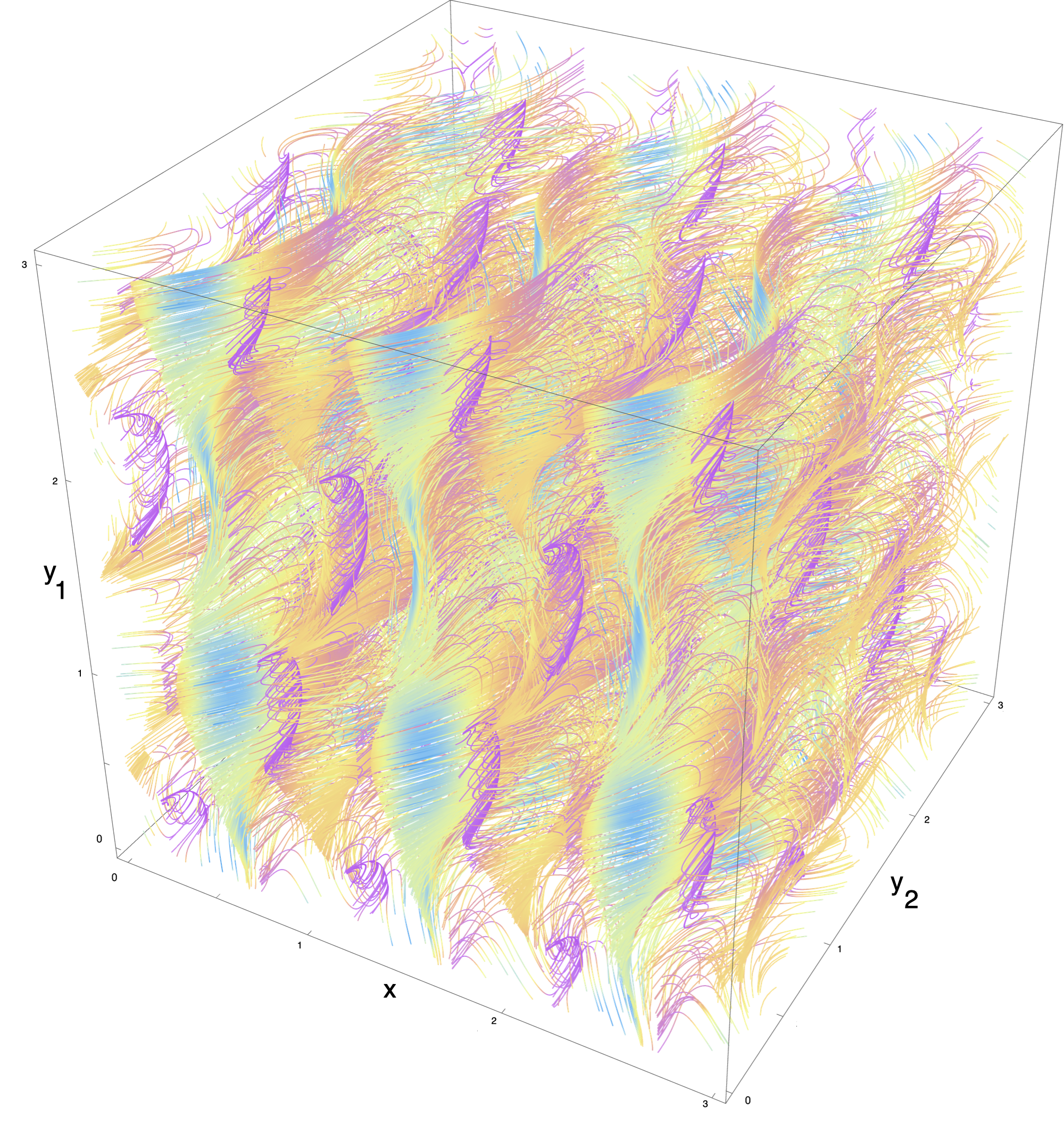}
\caption{Left: Streamlines for an example  flow  $(X(x, y_1,y_2),Y_1(x, y_1,y_2),Y_2(x, y_1,y_2))^T$ on  $\KK(1,2,(1,1))$, with one toroidal and two Klein coordinates, containing a vortex filament. Right: the same flow showing the  $\KK(1,2,(1,1))$ symmetries on an extended volume, $[0,3]^3$: the flow is 1-periodic in the toroidal coordinate, $x$, and 2-period in both Klein coordinates, $y_1$ and $y_2$, due to the flip symmetries. Streamlines coloured by norm of the vector field (light blue/fast, through yellow then orange then purple/slow)}
\label{flo3D}
\end{figure}


In \Cref{appB} we find a Fourier basis for vector fields on $\KK(k_1, k_2, B)$. Similar to the approach in \Cref{appA}, we first formulate the symmetry condition as being in the kernel of an operator on the vector space of vector fields. This correspondingly induces a constraint on the Fourier coefficients of the vector fields, expressed as being in the kernel of the dual of the aforementioned operator. We give an example of a vector field on the standard Klein bottle $\KK(1,1)$ in \Cref{fig:vector_fourier}, constructed on a Fourier basis.

\begin{figure}[h]
    \centering
\includegraphics[width=0.5\linewidth]{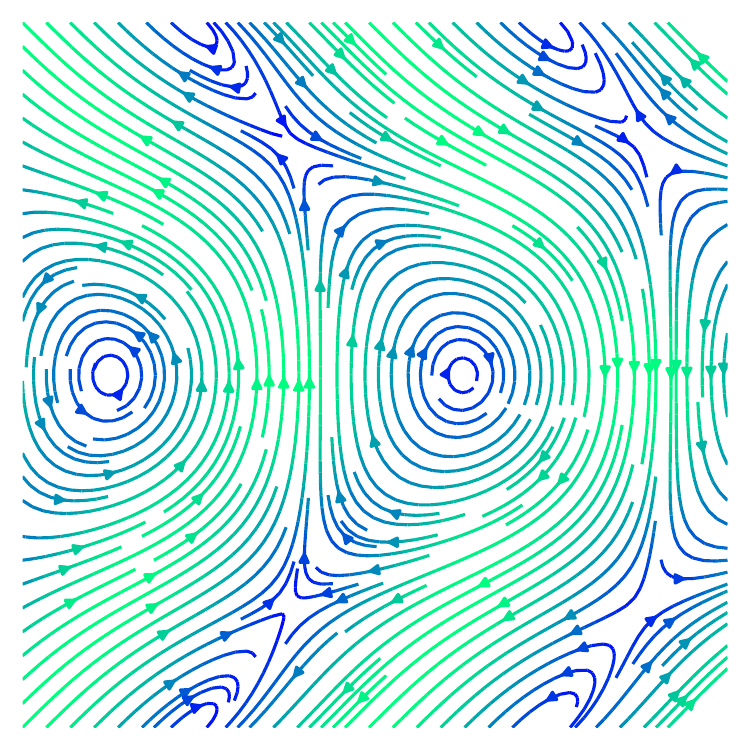}
    \caption{An example of a vector field $(u,v)$ on the Klein bottle, illustrated here as a unit square domain where the top and bottom sides are identified with opposite orientation, and the left and right hand are identified with the same orientation. Here $u(x,y) = \cos(2\pi x) \cos(\pi y) + \sin(2\pi x) \sin(2 \pi y)$, and $v(x,y) = \sin(2\pi x) \sin(\pi y) + \cos(2\pi x) \cos(2 \pi y)$. Note that vectors on the top and bottom edges of the square which are identified with opposite orientation has an extra flip in the $x$-coordinate of the vector field, reflecting the fact that the Klein bottle is not orientable. }
    \label{fig:vector_fourier}
\end{figure}

\section{A challenge: spiking dynamical systems modelling  neural columns}
Here we discuss an application of dynamical systems  with   attractors  set within compact manifolds having topological structures that need to be determined. This raises a number of challenges.

Human brains have evolved both architectures and dynamics to enable effective  information processing with around $10^{10}$ neurons. These {\bf spiking dynamical systems (SDSs)} represent challenges of large-scale state spaces containing  many possible cyclic interactions between neurons. Such brains exhibit  a directed {\it network-of-networks}  architecture, with the {\it inner}, densely connected,  networks of neurons called {\it neural columns} (see \cite{Grindrod2018OnPerspective} and the references therein). Near-neighbouring neural columns have  relatively sparse directed connections  as the {\it outer}  network (between pairs of neurons, one from each). 

Following \cite{Grindrod2017OnCoupling} (and subsequent very large-scale simulations  \cite{Grindrod2021Cortex-LikeWithin}), we consider  the dynamics of information processing  within SDSs representing neural columns, with  $10^2-10^4$ nodes representing  the neurons.   Each node is both {\it excitable} and  {\it refractory}: if the node is in its {\it ready state} when it receives an incoming spike,  from an upstream neighbour,  then it instantaneously fires  and emits an outgoing signal spike along the directed edges to each of its  downstream neighbours, which  takes a small time to  arrive. Once it has fired there must be  a short refractory period whilst the node recovers its local chemical equilibrium, during which time it cannot re-fire and just  ignores any further in-coming spike signals. Once the refractory period is complete the node  returns to the {\it ready state}.  The refractory  time period prevents  arbitrarily fast {\it bursting} (rapid repeat firing)  phenomena.

We will consider sparse directed networks that  are irreducible, so that all nodes may  be influenced by all others.  The whole  network should exhibit  a relatively small diameter. We set an appropriate  set of transit times for each directed edge,   independently and identically drawn from a uniform distribution, and a  common refractory period time, $\delta$,  for all nodes. Then the whole SDS may be kick-started with a single spike at $t=0$ at a particular node, while all other nodes begin in their ready resting state. The  dynamic results in a  firing sequence for each node: a list of firing times at which that node instantaneously spikes. The  SDS begins  to chatter amongst itself  and, after a burn-in period, will   settles down to some very long-term pattern (see \cite{Taube2010InterspikeCells} for an observed instance). It is deterministic and possibly chaotic: see Figures \ref{fig267} and \ref{noper} for  typical examples.

Since the network is irreducible,   we cannot consider sub-networks of the nodes; and  not all independent walks from one specific node to another may be viable: two such walks  may  result in  spike arrival times less that $\delta$ apart; with the  later one  ignored.  
However, the irredicibility does mean that in order to examine the dynamical behaviour of the whole system it is enough to examine  a single node. 
Furthermore, as the spikes are instantaneous, it is conventional within spike sequence analysis to examine the corresponding sequence of successive {\bf inter-spike intervals (ISIs)} \cite{Taube2010InterspikeCells}. By definition  these are reals and are  bounded below by $\delta$. The SDS model timescale is arbitrary: results depend only on the size of $\delta$ relative to the range of the i.i.d. edge transit time.

The SDS examples in Figure \ref{fig267} and Figure \ref{noper} illustrate  these features.  The ISI sequences suggest dynamics within a bounded  attractor, that lies within some manifold, ${\cal M}$ say, of unknown dimension and topology. Generalised Klein bottles and tori  are candidates for ${\cal M}$.

\begin{figure}[htbp]
\centering
\includegraphics[width=0.99\linewidth]{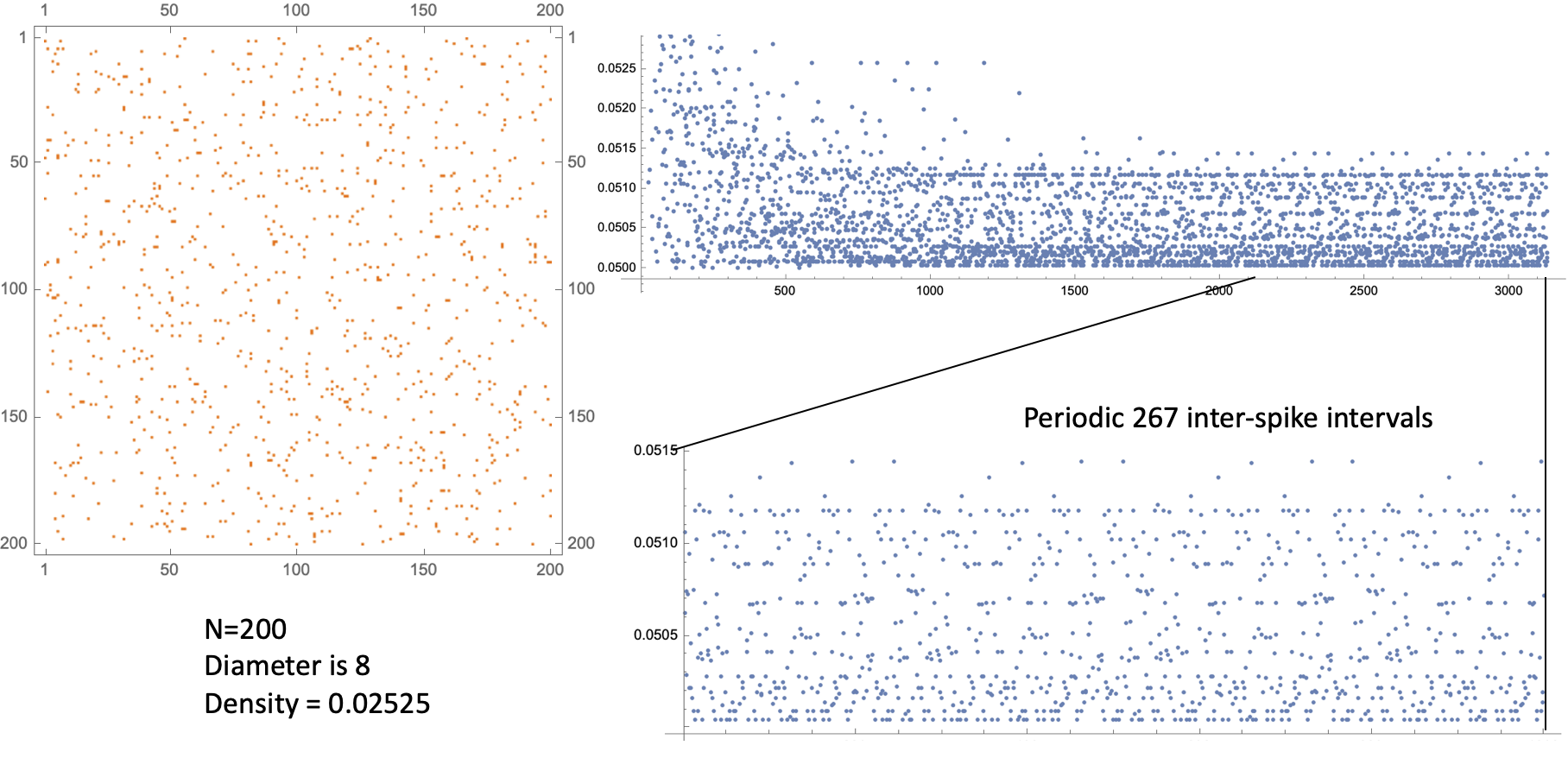}
\caption{Left: An  adjacency matrix for a  directed graph on $N=200$ nodes, with density $\approx$ 2.525\% and  network diameter 8. Right: the successive inter-spike intervals (ISIs) at a single node, after a burn-in period of around 1500 spikes the whole settles down to a long periodic orbit (with 267 successive ISIs) possibly containing many  quasi periodic features due to  cycles within the network. Here the edge transit times are i.i.d. in $U[0.5,1.5]$, and $\delta=0.050$.}
\label{fig267}
\end{figure}

\begin{figure}[htbp]
\centering
\includegraphics[width=0.99\linewidth]{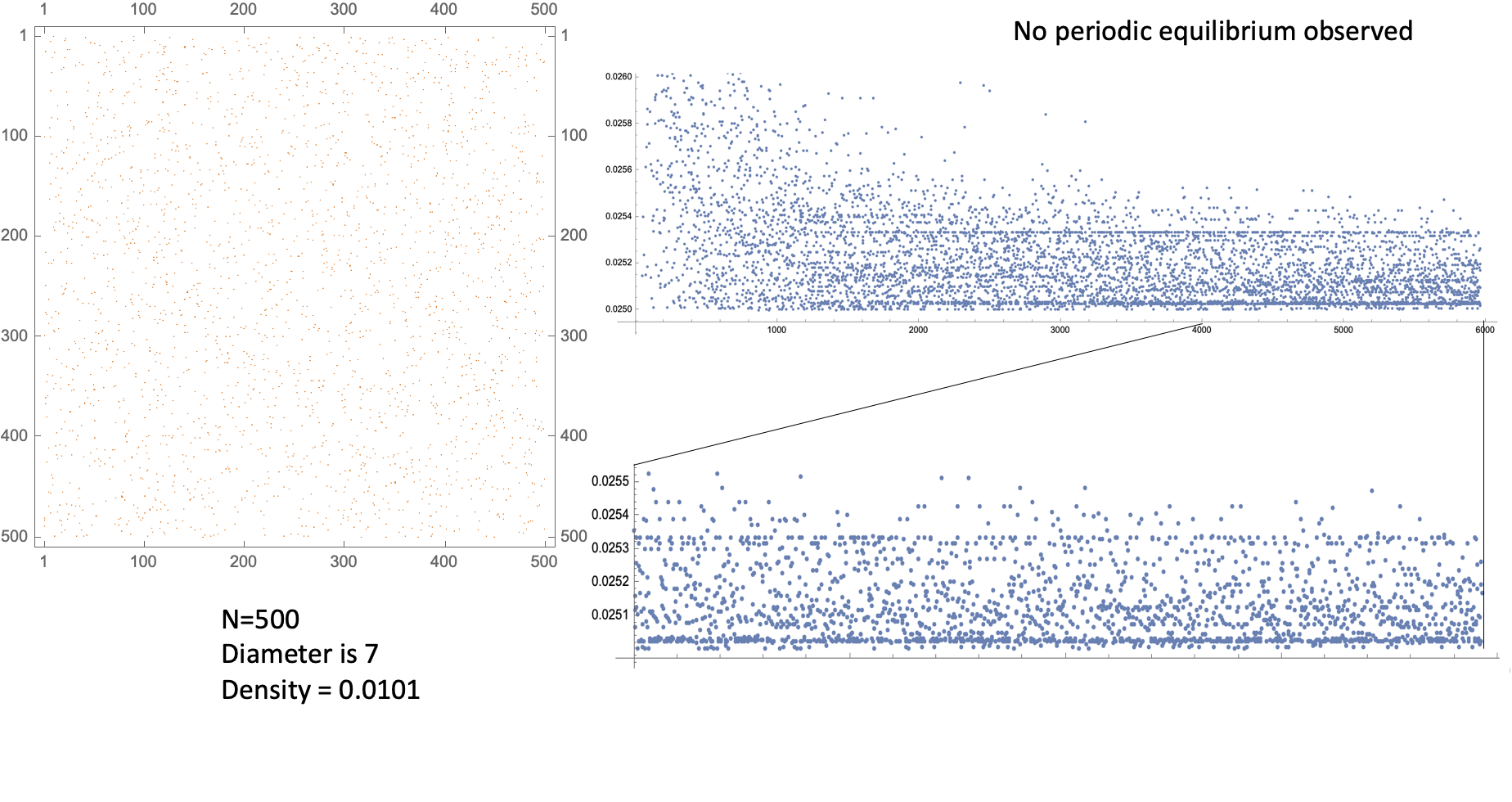}
\caption{Left: An  adjacency matrix for a  directed graph on $N=500$ nodes, with density $\approx$ 1.010\% and  network diameter 7. Right: the successive inter-spike intervals (ISIs) at a single node, after a burn-in period of around 3500 spikes the whole settles down to quasi periodic (possibly chaotic) behaviour, with no long period observed, containing many quasi periodic features due to  cycles within the network. The edge transit times are i.i.d. in U[0.5,1.5], and $\delta=0.025$.}
\label{noper}
\end{figure}


We may embed the observed ISI sequence  as a point cloud within  $\RR^L$ by discarding  the ISI  burn-in, and  moving a window of length $L$ successively along the sequence. Then we may  estimate the dimension, $D_{\cal M}(L)$, of the curved manifold, ${\cal M}\subset \RR^L$, on which that point cloud lives, from the set of all pairwise distances between points within the cloud, via the 2NN (two nearest neighbour) method \cite{Facco2017EstimatingInformation}. For $L$ small the point cloud (and ${\cal M}$) merely fills up the available dimensions, so $D_{\cal M}(L) \sim L$.  As $L$ increases further we will have estimates $D_{\cal M}(L)<<L$ 

Considering  a post burn-in  ISI sequence of length 1900 from the example given  in Figure \ref{noper},   we need to take $L\ge 40$ to avoid any duplicate  points  from windows along the sequence (which we know to be non-periodic).   For lower values of $L$ we may remove any duplicates from the point cloud (which will otherwise interfere with the 2NN algorithm).

We apply this method to  the post burn-in ISI sequence, of length 1900, given  in Figure \ref{noper}  in Figure \ref{zoop} (Left, Green). This  suggests   dimension, $D_{\cal M}$,  of 5 to 7 at the first obvious plateau, embedded in $L=7$ to 11 dimensions.  If $L$ is increased further then  $D_{\cal M}(L)$ increases rather slowly as the attractor {\it fills in} somewhat. For comparison we  show the result for the 267-periodic case, given in Figure \ref{fig267} (Left, Red); where necessarily we have 267 windowed points. This  suggests a  dimension, $D_{\cal M}$,  between 4 and  6.

\begin{figure}[htbp]
\centering
\includegraphics[width=0.48 \linewidth]{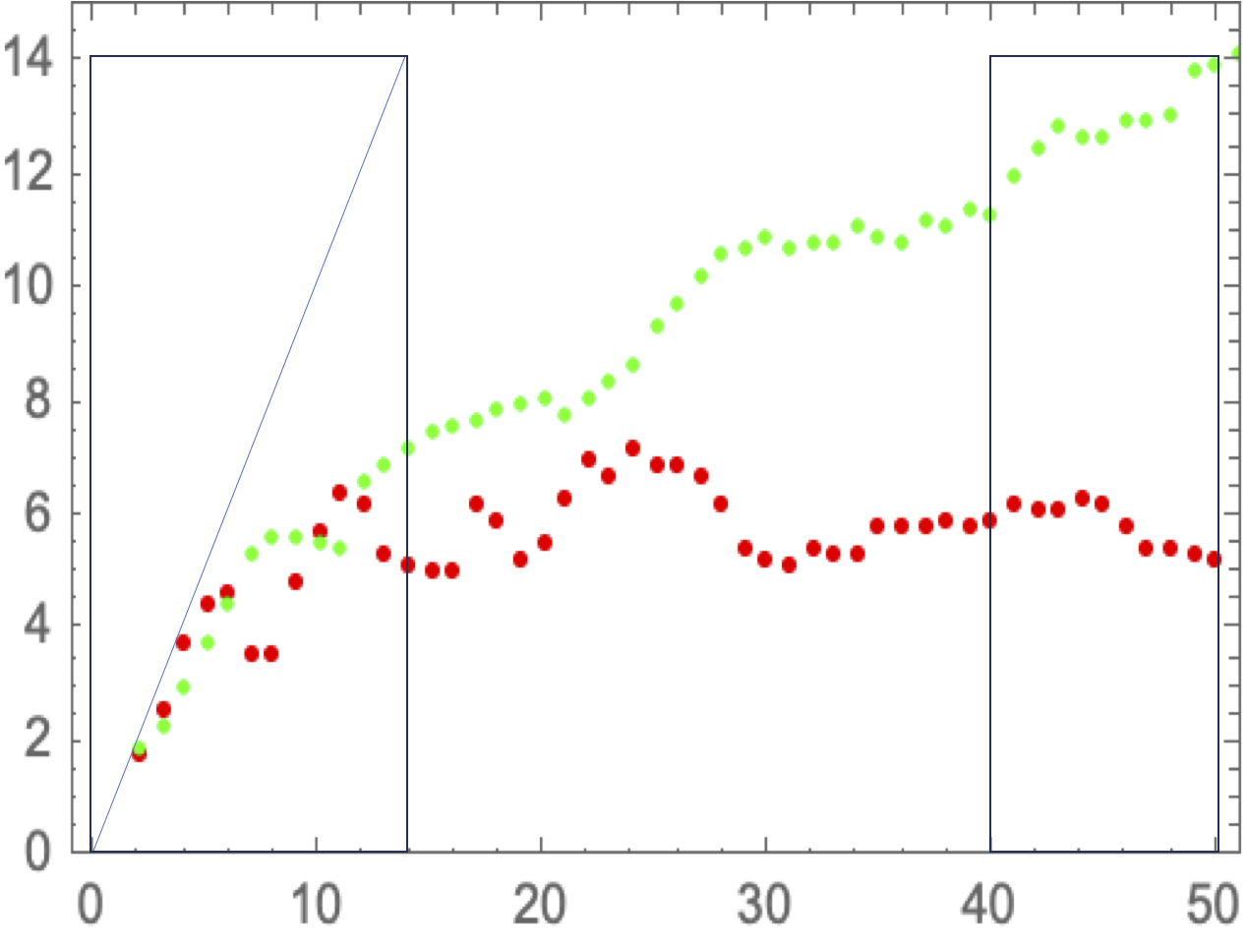}
\includegraphics[width=0.48\linewidth]{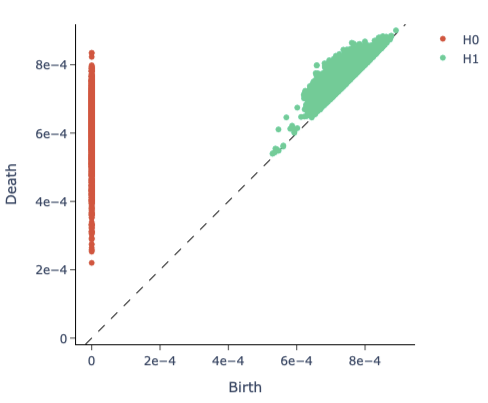}
\caption{Left: Plot of $D_{\cal M}(L)$ versus $L$ using the 2NN method applied to the  moving windows of length $L$, taken from a post burn-in ISI sequence. Green: from a sequence of 1900 points from the non-periodic SDS  given  in Figure \ref{noper}. Red: from a sequence of 267 points from  the 267-periodic SDS  given in Figure \ref{fig267}.
Right: A persistent homology analysis for the point cloud representing the (post burn-in) 267-periodic SDS given in Figure \ref{fig267},  for $L=16$, using  the  Rips filtration (based on pairwise distances between the points). Red points show the ($H_0$) features of connected components as the filtration scale increases, while green points show the ($H_1$) features corresponding to distinct rings around holes through the cloud: such features close to the diagonal are non-persistent and represent sampling noise. }
\label{zoop}
\end{figure}

It is clear that this class of SDSs give rise to highish dimensional compact attractors. What is required is a pipeline that begins from a suitable  embedding of the post burn-in ISIs, into some $\RR^L$ (just as above which determined the dimension, $D_{\cal M}(L)$,  of the manifold, $\cal{M}$, containing the attractor); followed by a computational  method (most likely based on persistent homology) that can determine (a)  the main topological features of the manifold containing the point cloud,  and subsequently (much more challenging)  (b) the topological symmetries that can differentiate between various  generalised Klein bottles of given  dimensions ($k_1+ k_2\approx D_{\cal M}(L)$). 

Following Figure \ref{zoop} (Left), and embedding the 267-periodic ISI from Figure \ref{fig267} into $L=16$ dimensions (whence the  estimate  $D_{\cal M}(L)\approx5.1$),   a persistent homology (PH) analysis is shown in Figure \ref{zoop} (Right). It is somewhat inconclusive,  possibly due to the variations  in the localised cloud density. This issue is the subject of active research. 

It is this characterisation of the SDS's unknown  attracting  manifolds,  that has driven the permissive generalisation of Klein bottles presented in this paper.   This is the subject of active ongoing research. 

\section{Discussion}
In this paper we have introduced a  generalisation of the standard Klein bottle, to higher dimensions, beyond those previously considered. We have focussed on a subclass where the automorphisms are {\it diagonal}, inducing flips (reflections)  of some independent coordinates. 

For all such Klein bottles we have produced both a simple (practical) method, and a rigorous argument, for the definition  of scalar fields (such as smooth probability distributions and potentials) as well as vector fields (flows) that are well-defined.  These constructions are useful when we wish to consider winding flows over Klein bottles, and possibly couple them together.

This class of Klein bottles can be extended to include automorphisms that induce swaps (permutations) between toroidal coordinates (diagonal reflections, rather than individual coordinate reflections). In future work we will extend this approach to consider an even wider class of compact manifolds without boundaries, including real projective geometries, for which there is not a partition of coordinates into Klein variables (controlling the automorphisms),  and toroidal variables (acted on by the automorphisms). Since spheres are the universal covering spaces of projective spaces, we would consider lifts of vector fields on the projective plane to its universal covering sphere to parametrise them, and express their Fourier basis with spherical harmonics that respect the required symmetries. 

Finally, we have set out a challenging application, where high dimensional spiking dynamical systems result in attractors over unknown compact manifolds which  require characterisation. This is highly problematical when the dimension of such spiking systems is very large and the resulting ISI sequences have to be embedded within a suitable space. The identification of the dimension and the topological  properties of resultant attractor manifolds is both a theoretical and computational challenge. 

\subsection*{Acknowledgements}
We are pleased to acknowledge the advice received from Clive E. Bowman and Des J. Higham. KMY would like to thank John Harvey for his mentorship.
PG was funded by EPSRC grant number EP/Y007484/1. KMY is supported by a UKRI Future Leaders Fellowship [grant number MR/W01176X/1; PI: J. Harvey].

The authors declare no conflicts of interest. For the purpose of open access, the authors have applied a CC BY public copyright licence to any Author Accepted Manuscript version arising from this submission.

\bibliography{refs}

\newpage
\appendix
\setcounter{section}{0} 

\section{Fourier modes of scalar fields on Klein Bottles} \label{appA} 
Recall the Klein group action on $\RR^{k_1} \times \RR^{k_2}$ described above. Given a homomorphism $\varphi: \ZZ^{k_2} \to \Aut(\ZZ^{k_1})$, we have the action of the group $G = \ZZ^{k_1} \rtimes_\varphi \ZZ^{k_2}$ on $\RR^{k_1} \times \RR^{k_2}$, where for $\lambda \in \ZZ^{k_1}$ and $\zeta \in \ZZ^{k_2}$,
\begin{align}
    (\lambda, \zeta) \cdot (x,y) = (\varphi(\zeta).x + \lambda, y  + \zeta).
\end{align}
Note that action of $G$ on $\RR^n$ is a covering space action  --- for any element not equal to the identity $(0,0)$, the action shifts at least one coordinate by an integer, so there is a sufficiently small neighbourhood $U$ of any point, such that $g \cdot U \cap U = \empty$ for $g \neq (0,0)$.

If $\varphi(\zeta)^2 = 1$ for all $\zeta \in \ZZ^{k_2}$,  then $H = \ZZ^{k_1} \times (2\ZZ)^{k_2}$ is a normal subgroup of $G = \ZZ^{k_1} \rtimes_\varphi \ZZ^{k_2}$. Thus, we can factor the quotient of $\RR^n$ by $G$ into two successive quotients:
\begin{align}
    \RR^n \twoheadrightarrow \RR^n / H  \twoheadrightarrow \RR^n / G. 
\end{align}
The first quotient is the quotient of $\RR^n$ by the action of the subgroup $H = \ZZ^{k_1} \times (2\ZZ)^{k_2}$ on $\RR^n$, where due to $\varphi(2\zeta) = \varphi(\zeta)^2 = 1$,
\begin{equation}
    (\lambda, 2\zeta) \cdot (x,y) = (x + \lambda , y + 2\zeta).
\end{equation}
Thus $\RR^n /H$ is diffeomorphic to the $n$-torus $\TT$ where the aspect ratio of any of the first $k_1$ coordinates to the last $k_2$ coordinates is 1:2. Because the action of $G$ on $\RR^n$ is a covering space action, and $H$ is a normal subgroup, the quotient  $q: \TT = \RR^n / H  \to \RR^n / G$ is a covering obtained by the quotient of $\TT$ by a covering space action of the group $G/H \cong \ZZ_2^{k_2}$ on $\TT$. Writing elements of $\TT$ as $z = (e^{2\pi i x_1}, \ldots, e^{2\pi i x_{k_1}},e^{\pi i y_1}, \ldots, e^{\pi i y_{k_2}})$, the action of element $\beta = (\beta_1, \ldots, \beta_{k_2}) \in \ZZ_2^{k_2}$ on such an element is given by 
\begin{align}
    & \beta \cdot (e^{2\pi i x_1}, \ldots, e^{2\pi i x_{k_1}},e^{\pi i y_1}, \ldots, e^{\pi i y_{k_2}}) \\ \nonumber
    &= (e^{2\pi i (\varphi(\beta)x)_1}, \ldots, e^{2\pi i (\varphi(\beta)x)_{k_1}},e^{\pi i (y_1 + \beta_1) }, \ldots, e^{\pi i (y_{k_2} + \beta_{k_2})}).
\end{align}
If we consider functions $f: K \to \CC$, they lift to a function on the torus $q^\ast f: \TT \to \CC$, defined by the function $q^\ast f = f \circ q$ sending elements of $\TT$ to $K$ by $q$ and evaluating $f$ on the image of those points. The lifted function $q^\ast f$ is called the \emph{pullback} of $f$. Since $q$ is surjective, $q^\ast$ is an injection of $\CC$-valued functions on $K$ into those of $\TT$. A function $h: \TT \to \CC$ is the lift of some function on $K$, iff for every $y \in K$, the function $h$ is constant on $f^{-1}(y)$, that is, the orbit of a representative $z \in f^{-1}(y)$
\begin{equation}
    h(z) = h(g \cdot z) \quad \forall z \in T,\ g \in G.
\end{equation}
We invoke \Cref{lem:symmetry}, which implies $h$ satisfies the symmetry above iff it is in the kernel of a \emph{linear} operator $\cL: \CC[\TT] \to \CC[\TT]$ on the vector space of $\CC[\TT]$ of complex valued functions on $\TT$, where 
\begin{equation}\label{eq:operator_scalar}
    \left(\cL h\right)(z) = h(z) - \frac{1}{2^{k_2}} \sum_{\beta \in \ZZ_2} h(\beta \cdot z).
\end{equation}
If $h$ admits a Fourier transform, then $h$ satisfies the symmetry iff the Fourier coefficients $c \in \CC[\ZZ^{k_1} \times \ZZ^{k_2}]$ of $h$ are in the kernel of the linear operator 
\begin{equation} \label{eq:dual_operator_scalar}
   (\cL^\star c){(\lambda, \zeta)} =  c(\lambda, \zeta) - \frac{1}{2^{k_2}}  \sum_{\beta \in \ZZ_2^{k_2}}(-1)^{\zeta.\beta} c( \varphi(\beta)^T\lambda, \zeta).
\end{equation}
This is a consequence of \Cref{lem:symmetry_fourier}. We further remark that while $\cL^\star$ is an operator on an infinite dimensional space, it can be expressed as a direct sum of countably many finite dimensional operators (\Cref{lem:block}).

We give some examples below.
\begin{example}[The Standard Klein Bottle] The standard Klein bottle has $k_1 = k_2 = 1$. The induced homomorphism $\varphi: \ZZ_2 \to \Aut(\ZZ)$ is given by $\varphi(\beta) = (-1)^\beta$. The dual operator $\cL^\star: \CC[\ZZ \times \ZZ] \to \CC[\ZZ \times \ZZ]$ \cref{eq:dual_operator_scalar} is given by 
\begin{align}
    \cL^\star c{(\lambda, \zeta)} &= c(\lambda, \zeta) - \frac{1}{2}  \sum_{\beta \in \ZZ_2} (-1)^{\zeta.\beta} c( \varphi(\beta)\lambda, \zeta) \\ \nonumber
                                          &= \frac{1}{2} \left(c(\lambda, \zeta) -  (-1)^{\zeta} c( -\lambda, \zeta) \right)
\end{align}
The kernel is given by
\begin{align}
    c \in \ker \cL^\star \iff \begin{cases}
        c(\lambda, \zeta) =  c(-\lambda, \zeta) & \zeta \in 2\ZZ \\ 
         c(\lambda, \zeta) =  -c(-\lambda, \zeta) & \zeta \in 2\ZZ +1.
    \end{cases}
\end{align}
In other words, $\ker \cL^\star$ admits a basis of indicator functions on $\ZZ \times \ZZ$
\begin{align}
    \ker \cL^\star = \mathrm{span}\left( \begin{cases}
        1_{(\lambda, \zeta)} + 1_{(-\lambda, \zeta)}  & \lambda \in \ZZ_{\geq 0},\ \zeta \in 2\ZZ \\ 
         1_{(\lambda, \zeta)} - 1_{(-\lambda, \zeta)}  & \lambda \in \ZZ_{\geq 0},\  \zeta \in 2\ZZ +1
    \end{cases} \right).
\end{align}
Taking the inverse Fourier transform of these basis functions, we obtain the Fourier basis of functions lifted from the Klein bottle to the Torus
\begin{align}
    \ker \cL = \mathrm{span}\left( \begin{cases}
        \cos(2 \pi \lambda x) e^{\pi i \zeta y} & \lambda \in \ZZ_{\geq 0},\ \zeta \in 2\ZZ \\ 
         \sin(2 \pi \lambda x) e^{\pi i \zeta y}   & \lambda \in \ZZ_{\geq 0},\  \zeta \in 2\ZZ +1
    \end{cases} \right).
\end{align}
We can verify that these basis functions are invariant under the action $(\lambda, \zeta) \cdot (x,y) = ((-1)^\zeta x + \lambda, y + \zeta)$.
\end{example}
\begin{example}[Transpose and Flip]
    We take $k_1 = k_2 = 2$, and set $\varphi: \ZZ^2 \to \Aut(\ZZ^2)$ to be the following homomorphism expressed in matrix form as
\begin{equation}
    \varphi(n_1, n_2) = (-1)^{n_1} \begin{pmatrix}
        0 & 1 \\
        1 & 0 
    \end{pmatrix}^{n_2}. 
\end{equation}
Let $(\lambda_1, \lambda_2, \zeta_1, \zeta_2) \in \ZZ^{k_1} \times \ZZ^{k_2}$ denote the Fourier variables. Let us consider Fourier coefficients with $(\zeta_1, \zeta_2) = (1,1)$. We also restrict to the subspace of coefficients where $(\lambda_1, \lambda_2) \in \{-1,0,1 \}^2$. Setting $v(\lambda) = c(\lambda, (1,1))$ as a shorthand, the dual operator $\cL^\star$ acts on the the vector of coefficients by the following matrix:
\begin{align}
   \cL^\ast \rvert_{|\lambda_i| \leq 1,\ \zeta = (1,1)} v =  \frac{1}{4}\begin{pmatrix}
        4 &   &   &   &   &    &    &    &    \\
  & 4 &   &   &   &    &    &    &    \\
  &   & 4 &   &   &    &    &    &    \\
  &   &   & 2 & 2 &    &    &    &    \\
  &   &   & 2 & 2 &    &    &    &    \\
  &   &   &   &   & 3  & 1  & -1 & 1  \\
  &   &   &   &   & 1  & 3  & 1  & -1 \\
  &   &   &   &   & -1 & 1  & 3  & 1  \\
  &   &   &   &   & 1  & -1 & 1  & 3 
    \end{pmatrix} \begin{pmatrix}
        v(0,0) \\
        v(-1,-1) \\ v(1,1) 
        \\ v(-1,1) \\ v(1,-1)\\
        v(-1,0) \\ v(0,-1) \\ v(0,1)\\ v(1,0)
    \end{pmatrix}.
\end{align}
The blocks correspond to $\lambda$'s in $\ZZ^2$ on the same orbit of the the action of automorphisms on $(\lambda_1, \lambda_2) \in \ZZ^2$:
\begin{equation} \label{eq:orbits_flip_action}
    \beta \cdot \lambda = {\varphi}(\beta_1, \beta_2)^T \lambda = (-1)^{\beta_1} \begin{pmatrix}
        0 & 1 \\
        1 & 0 
    \end{pmatrix}^{\beta_2} \begin{pmatrix}
        \lambda_1 \\
        \lambda_2
    \end{pmatrix}.
\end{equation}
The orbits are illustrated in \Cref{fig:orbits_flip}.
\begin{figure}[h]
    \centering
\includegraphics[width=0.85\linewidth]{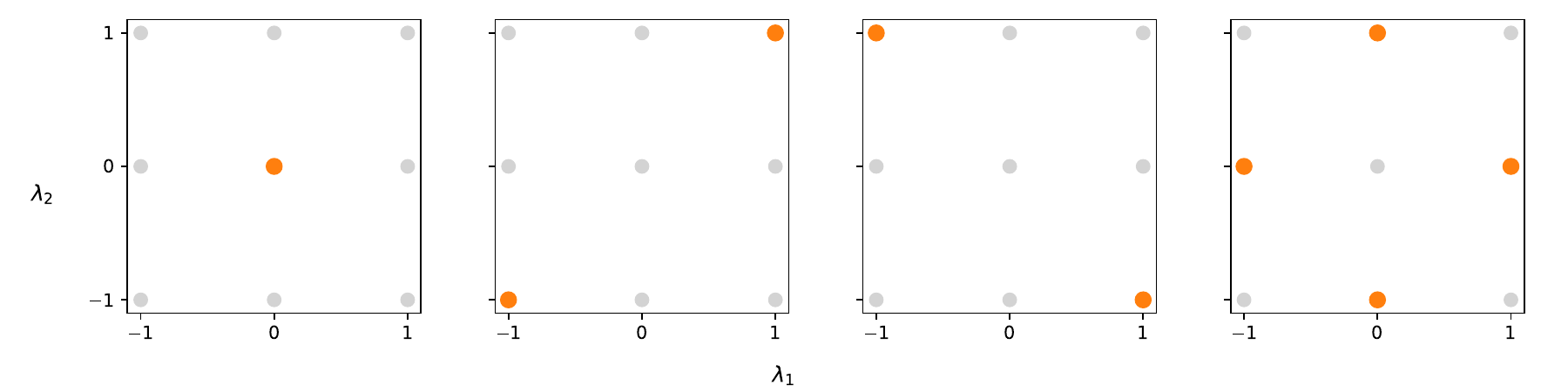}
    \caption{The orbits of the action of automorphisms of $\ZZ^2$, parametrised by \cref{eq:orbits_flip_action}, on $(\lambda_1, \lambda_2) \in \{-1,0,1 \}^2$.}
    \label{fig:orbits_flip}
\end{figure}
We can then compute the kernel of $\cL^\star$ on the subspace of coefficients, which is two-dimensional in a subspace of dimension 9, and admits an orthogonal basis
\begin{align}
  &\ker \cL^\ast \rvert_{|\lambda_i| \leq 1,\ \zeta} \\ \nonumber
  &= \{ v(-1,1) = v(1,-1),\ -v(-1,0) + v(0,-1) - v(0,1) + v(1,0) = 0 \}
\end{align}
Taking the inverse Fourier transform of the basis vectors above, we obtain a basis for the kernel of $\cL$ restricted to these Fourier modes $|\lambda_i| \leq 1, \zeta = (1,1)$:
\begin{align}
    \cF^{-1} ( \ker \cL^\ast \rvert_{|\lambda_i| \leq 1,\ \zeta})  = \mathrm{span} \{ & \sin(2\pi(x_1 - x_2)) e^{\pi i (y_1 + y_2)},\\
    &\left(\sin(2\pi x_1) -  \sin(2\pi x_2)\right) e^{\pi i (y_1 + y_2)}\} \subset \ker \cL.
\end{align}
We can then check that these are a basis for the functions that are invariant under the automorphisms $\varphi(\beta)$ restricted to the Fourier modes above. 

\end{example}

\begin{example}[Double Flip] Consider the generalised Klein bottle $\KK(2,2,B)$, where $B$ Set $\varphi: \ZZ^2 \to \Aut(\ZZ^2)$ to be the following homomorphism expressed in matrix form as
\begin{equation}
    \varphi(n_1, n_2) = \begin{pmatrix}
        (-1)^{n_1} & 0 \\
        0 & (-1)^{n_2} 
    \end{pmatrix}. 
\end{equation}
As with the example above, let us restrict to Fourier modes with $(\zeta_1, \zeta_2) = (1,1)$, and $(\lambda_1, \lambda_2) \in \{-1,0,1 \}^2$. Repeating the same procedure, we obtain a basis for the kernel of $\cL$ restricted to these Fourier modes:
\begin{align}
     \mathrm{span} \{ \sin(2\pi x_1) \sin(2\pi x_2)e^{i\pi(y_1 + y_2)} \} \subset\ker \cL.
\end{align}
We can then check that these are a basis for the functions that are invariant under the automorphisms $\varphi(\beta)$ restricted to the Fourier modes above. 
\end{example}

\section{Fourier modes of vector fields on Klein bottles} \label{appB}
We now apply the same treatment to vector fields.  We first consider local coordinate transformations along orbits of the group action on $\TT$. Note that the tangent bundle of the torus is simply a trivial bundle $\mathcal{T} \TT \cong \TT \times \RR^n$, and vectors can be parallel translated around $\TT$ as in Euclidean space,  since we are working with the flat torus. This allows us to fix a global Euclidean coordinates frame across all of the tangent spaces $T_p \TT$ in the tangent bundle $\mathcal{T}\TT$, and write vector fields as a function $V: \TT \to \RR^n$.  

We now consider how our $\ZZ_2^{k_2}$-group action on $\TT$ induces a group action on $\mathcal{T} \TT$. Recall diffeomorphisms between smooth manifolds induce diffeomorphisms between their respective tangent bundles. We recall the $\ZZ_2^{k_2}$-action is a subgroup of the group of diffeomorphism of $\TT$. The action $z \mapsto \beta \cdot z$ induces a transformation that sends a vector $v \in T_z \TT$ to a vector $\beta \cdot v \in T_{\beta\cdot z} \TT$. Since vectors transform via matrix multiplication by the Jacobian, and  $\varphi(\beta) \oplus 1_{k_2}$ is the Jacobian of the transformation, we have 
\begin{align}
\beta \cdot v = \underbrace{\begin{pmatrix}
        \varphi(\beta) & 0\\
        0 & 1_{k_2}
    \end{pmatrix}}_{A(\beta)}v
\end{align}
Note that $A(\beta) \in \GLin(n, \ZZ)$. A vector field $V$ on the torus is invariant with respect to our group action, if for all $\beta \in \ZZ_2^{k_2}$,
\begin{equation} \label{eq:vector_symmetry}
    V(\beta \cdot z) = A(\beta) V(z).
\end{equation}

We can express any vector field on $\KK$ as a vector field on $\TT$  satisfying \cref{eq:vector_symmetry}, and any vector field on $\TT$ satisfying \cref{eq:vector_symmetry} represents a vector field on $\KK$. Recall $q : \TT \to \KK$ is a local diffeomorphism. For $z \in \TT$ and $q(z) \in \KK$, we can choose a sufficiently small neighbourhood $U$ of $z$ such that $q^{-1}(q(U)) = \sqcup_{\beta} \beta \cdot U$. Because $q$ is a smooth covering, the restriction $q\rvert_U: U \to q(U)$ is a diffeomorphism, and induced isomorphisms of vector fields defined on $U$ and $q(U)$. Because the group action sends $U$ diffeomorphically to all other disjoint copies in $q^{-1}(q(U))$, any vector field defined on $q(U)$ pullsback to a vector field on $q^{-1}(q(U))$, satisfying \cref{eq:vector_symmetry}; conversely, any vector field on $U$ satisfying \cref{eq:vector_symmetry} pushes forward to a well-defined vector field on $q(U)$. Because this holds on every element of a sufficiently fine open cover of $\KK$, we deduce that any vector field $W$ on $\KK$ pulls back to a vector field $q^\ast W$ on $\TT$ satisfying \cref{eq:vector_symmetry}, and any vector field $V$ on $\TT$ satisfying \cref{eq:vector_symmetry} pushes forward to a vector field $q_\ast V$ on $\KK$; also note that if $W$ can be pullback, then $q_\ast q^\ast W = W$; and similarly if $V$ can be pushed forward, then $ q^\ast q_\ast V = V$.

Applying \Cref{lem:symmetry}, a vector field $V: \TT \to \RR^n$ satisfies \cref{eq:vector_symmetry} iff it is in the kernel of the operator
\begin{equation} \label{eq:vec_torus_klein_operator}
     \left(\cL V\right)(z) = V(z) - \frac{1}{2^{k_2}} \sum_{\beta \in \ZZ_2^{k_2}} A(\beta)V(\beta \cdot z).
\end{equation}
Writing $V = (X,Y)$ where $X: \TT \to \RR^{k_1}$ and $Y: \TT \to \RR^{k_2}$, the condition $V \in \ker \cL$ can be rewritten as thus, since $A(\beta) = \varphi(\beta) \oplus 1_{k_2}$ is block diagonal:
\begin{align}
    X(z) &= \frac{1}{2^{k_2}}  \sum_{\beta \in \ZZ_2^{k_2}} \varphi(\beta) X(\beta \cdot z) \\
    Y(z)&= \frac{1}{2^{k_2}}  \sum_{\beta \in \ZZ_2^{k_2}} Y(\beta \cdot z).
\end{align}
Note that the condition on each $Y$ coordinate is simply the scalar field condition. We focus on the $X$ coordinates of the vector field. If $X$ admits a Fourier transform, then \Cref{lem:symmetry_fourier} implies the Fourier coefficients $c= \cF(X): \ZZ^{k_1} \times \ZZ^{k_2} \to \CC^{k_1}$ are given by the kernel of the following operator 
\begin{align} \label{eq:vec_torus_klein_operator_fourier}
     (\cL_X^\star c)(\lambda, \zeta) = c( \lambda, \zeta) -\frac{1}{2^{k_2}}  \sum_{\beta \in \ZZ_2^{k_2}} (-1)^{\zeta.\beta} \varphi(\beta) c(\varphi(\beta)^T \lambda, \zeta) .
\end{align}
\begin{example}[The Standard Klein Bottle]\label{ex:standard_klein} For the Klein bottle $\KK(1,1)$, the operator $\cL_X^\star$ (\cref{eq:vec_torus_klein_operator_fourier}) on the Fourier coefficient  $c: \ZZ  \to \CC$ of the $X$ component of the vector field $V$  is 
\begin{align}
    (\cL_X^\star c)(\lambda, \zeta) = \frac{1}{2}\left(c(\lambda, \zeta) + c(-\lambda, \zeta) \right).
\end{align}
By solving for the kernel of $\cL_X^\star$ the $X$ component admits a Fourier basis 
\begin{align}
    X \in  \mathrm{span} \bigg( &\{\sin(2 \pi \lambda x) e^{\pi i \zeta y} \ \mid \ \lambda \in \ZZ_{\geq 0},\ \zeta \in 2\ZZ \} \\ \nonumber
                    &\cup\ \{\cos(2 \pi \lambda x) e^{\pi i \zeta y}  \ \mid \ \lambda \in \ZZ_{\geq 0},\ \zeta \in 2\ZZ + 1\} \bigg) .
\end{align}
Because the $Y$ coordinate satisfies the same constraints as the scalar field case, 
\begin{align}
    Y \in  \mathrm{span} \bigg( &\{\cos(2 \pi \lambda x) e^{\pi i \zeta y} \ \mid \ \lambda \in \ZZ_{\geq 0},\ \zeta \in 2\ZZ \} \\ \nonumber
                    &\cup\ \{\sin(2 \pi \lambda x) e^{\pi i \zeta y}  \ \mid \ \lambda \in \ZZ_{\geq 0},\ \zeta \in 2\ZZ + 1\} \bigg) .
\end{align}
Thus, vector fields on $\TT$ with the following Fourier series pushes forward to a vector field on the Klein bottle $\KK$:
\begin{align}
    V(e^{2\pi i x}, e^{\pi i y}) =\sum_{\lambda  \geq 0}   \sum_{\zeta \in 2\ZZ} 
    \begin{pmatrix}
        r_{\lambda, \zeta} \sin(2\pi \lambda x) \\
        s_{\lambda, \zeta}\cos(2\pi \lambda x)
    \end{pmatrix}  e^{i \pi \zeta y}  + \sum_{\zeta \in 2\ZZ + 1} 
    \begin{pmatrix}
        r_{\lambda, \zeta} \cos(2\pi \lambda x) \\
        s_{\lambda, \zeta}\sin(2\pi \lambda x)
    \end{pmatrix}  e^{i \pi \zeta y} 
\end{align}
\end{example}

\section{Technical lemmas for group actions and symmetries}
Consider the group of affine transformations $\Aff(\RR^n)$ on $\RR^n$. For $b \in \RR^n$, and $A \in \GLin(n, \RR)$ an invertible matrix, we have an affine action on $\RR^n$ given by
\begin{equation}
  (b,A) \cdot  x = A.x + b
\end{equation}
The affine group is a semi-direct product $\Aff(\RR^n) = \RR^n \rtimes \GLin(n, \RR)$, where 
\begin{equation}
    (b', A') \ast (b, A) = (b' + A'b, A'A).
\end{equation}
We can check that this is the choice of product structure such that it is compatible with the composition of affine transformations, satisfying 
\begin{equation}
    (b', A') \cdot ((b, A) \cdot x) = ((b', A') \ast (b, A)) \cdot x.
\end{equation}
Suppose we restrict to a subgroup of affine transformations, parametrised over another group $G$. In other words, we have a group homomorphism $\Phi: G \to \Aff(\RR^n)$, which we write as 
\begin{equation}
    \Phi:\ g \mapsto (b(g), A(g))
\end{equation}
where $A: G \to \GLin(n, \RR)$ and $b: G \to \RR^n$ are functions. The group homomorphism condition on $\Phi$ enforces that $A$ is a group homomorphism. As for $b$, since
\begin{equation}
    b(g \cdot h) = A(g)b(h)  + b(g).
\end{equation}
Unless $A$ maps every element of $G$ to the identity matrix, $b$ is notably not a homomorphism .  

\begin{remark}
    In the case of the generalised Klein bottle, we consider $G = \ZZ^{k_1} \rtimes_\varphi \ZZ^{k_2}$ to be the semi-direct product between $\ZZ^{k_1}$ and $\ZZ^{k_2}$, specified by $\varphi: \ZZ^{k_2} \to \Aut (\ZZ^{k_1})$. Writing an element of $G$ as $(\lambda, \zeta)$ where $\lambda \in \ZZ^{k_1}$, and $\zeta \in \ZZ^{k_2}$, we set $b = (b_1, b_2)$ where $b_1(\lambda, \zeta) = \lambda$ and  $b_2(\lambda, \zeta) = \zeta$ simply embeds the integer tuple as real coordinates. Thus
\begin{align}
    b_1((\lambda', \zeta') \cdot (\lambda, \zeta)) &= A_{11}(\lambda' , \zeta')b_1(\lambda, \zeta) +  A_{12}(\lambda' , \zeta')b_2(\lambda , \zeta) + b_1(\lambda' , \zeta') \\
   \implies \varphi(\zeta') \lambda &=A_{11}(\lambda' , \zeta')\lambda +  A_{12}(\lambda' , \zeta')\zeta \\
   b_2((\lambda', \zeta') \cdot (\lambda, \zeta)) &= A_{21}(\lambda' , \zeta')b_1(\lambda, \zeta) +  A_{22}(\lambda' , \zeta')b_2(\lambda , \zeta) + b_2(\lambda' , \zeta') \\
   \implies \zeta &= A_{21}(\lambda' , \zeta')\lambda +  A_{22}(\lambda' , \zeta')\zeta.
\end{align}
Since these relations hold for arbitrary $(\lambda, \zeta)$, we conclude that $A_{11} = \varphi$, $A_{12} = A_{21} = 0$. Thus 
\begin{equation}
    (\lambda, \zeta) \cdot (x,y) = (\varphi(\zeta).x + \lambda , y + \zeta).
\end{equation}
\end{remark}

\begin{lemma} \label{lem:symmetry_fourier}
Let $\TT$ be an $n$-torus equipped with a $G$-action, where $G$ is a finite group, and acts on $z = (e^{2\pi i \theta_1}, \ldots, e^{2\pi i \theta_n}) \in \TT$ by the transformation
    \begin{equation}
        g \cdot z = (e^{2\pi i (A(g)\theta +b(g))_1}, \ldots, e^{2\pi i (A(g)\theta +b(g))_n})
    \end{equation}
where $A: G \to \GLin(n, \ZZ)$ is a homomorphism, and $b(g)$ satisfies 
\begin{equation*}
    b(g \cdot h) =  A(g)b(h) + b(g) \mod 1
\end{equation*}
so that the above transformation is a group action. Consider the dual operator $\cL^\star : \CC^n[\ZZ^n] \to \CC^n[\ZZ^n]$ of $\cL: \CC^n[\TT] \to \CC^n[\TT]$ in \cref{eq:general_symmetry_operator}, defined via
\begin{equation}
    \cL^\star = \cF\cL \cF^{-1}.
\end{equation}
Then $\cL^\star$ is explicitly given on $c \in \CC[\ZZ^n]$, $k \in \ZZ^n$ by
\begin{equation} \label{eq:torus_sym_fourier_operator}
    (\cL^\star c)(k) = c(k) - \frac{1}{|G|} \sum_{g \in G} \chi(g^{-1}) c(A(g^{-1})^T k) e^{2\pi i k^T A(g^{-1})b(g)}.
\end{equation}
\end{lemma}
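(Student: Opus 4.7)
The plan is to exploit the definition $\cL^\star = \cF \cL \cF^{-1}$ together with the explicit Fourier expansion on the $n$-torus. Since $\cL$ (from \cref{eq:general_symmetry_operator}) is built as a weighted average of $h$ against the $G$-action, the whole computation reduces to understanding the Fourier transform of a single translate $h(g \cdot z)$ for $h = \cF^{-1}(c)$. Accordingly, I would start from an arbitrary coefficient sequence $c \in \CC[\ZZ^n]$, form $h(z) = \sum_{k \in \ZZ^n} c(k)\, e^{2\pi i k^T \theta}$ with $z = (e^{2\pi i \theta_1}, \ldots, e^{2\pi i \theta_n})$, apply $\cL$ pointwise, and finally read off Fourier coefficients.

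The central step is a change of variable on the Fourier side. Substituting the explicit action $\theta \mapsto A(g)\theta + b(g)$ into the series gives
\begin{equation*}
    h(g \cdot z) \;=\; \sum_{k \in \ZZ^n} c(k)\, e^{2\pi i k^T A(g)\theta}\, e^{2\pi i k^T b(g)} .
\end{equation*}
Because $A(g) \in \GLin(n, \ZZ)$ and $A$ is a homomorphism, the map $k \mapsto A(g)^T k$ is a $\ZZ$-linear bijection of $\ZZ^n$ whose inverse is $A(g^{-1})^T$. Reindexing so that the exponential in $\theta$ becomes the standard character $e^{2\pi i k^T \theta}$ identifies the $k$-th Fourier coefficient of $h(g\cdot z)$ as
\begin{equation*}
    c(A(g^{-1})^T k)\, e^{2\pi i k^T A(g^{-1}) b(g)}.
\end{equation*}
Assembling these coefficients into $\cL^\star c$ by applying $\cF$ to $\cL h$ term-by-term, and absorbing the swap $g \leftrightarrow g^{-1}$ in the finite sum (equivalently, reading off that the character weight in \cref{eq:general_symmetry_operator} appears as $\chi(g^{-1})$), yields the displayed formula \cref{eq:torus_sym_fourier_operator}.

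I expect the main obstacle to be bookkeeping rather than any deep mathematical content: correctly tracking transposes and inverses of $A$, and verifying that the phase $e^{2\pi i k^T b(g)}$ is well-defined in spite of $b(g)$ being determined only modulo $\ZZ^n$ by the action on $\TT$. The latter is automatic because $k \in \ZZ^n$ forces $e^{2\pi i k^T m} = 1$ for any $m \in \ZZ^n$, so the integer ambiguity in $b(g)$ disappears from the phase. The cocycle identity $b(gh) = A(g)b(h) + b(g) \pmod{1}$ is invoked only to guarantee that the action on $\TT$ is well-defined in the first place; it plays no further role in the Fourier manipulation, which touches one group element at a time.
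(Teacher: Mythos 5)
Your proposal is correct and follows essentially the same route as the paper's proof: conjugate $\cL$ by the Fourier transform, substitute the affine action $\theta \mapsto A(g)\theta + b(g)$ into each character, and reindex the lattice by the unimodular bijection $k \mapsto A(g)^T k$ to read off the coefficient $c(A(g^{-1})^T k)\, e^{2\pi i k^T A(g^{-1}) b(g)}$. Your added remark that the phase is insensitive to the integer ambiguity in $b(g)$ is a small but worthwhile point of rigour not spelled out in the paper.
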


\begin{proof}
    Consider $c \in \CC^n[\ZZ^n]$. 
    \begin{align*}
        \cL^\star c &= \cF \cL \cF^{-1} c = \cF \left(\cL \int  c(k)e^{2\pi i k } dk \right) = \cF \left( \int  \cL \left(c(k)e^{2\pi i k } \right)dk \right)
    \end{align*}
     Where we have the last equality due to linearity. Thus 
     \begin{align*}
         \cL \left(c(k)e^{2\pi i k } \right) &= c(k)e^{2\pi i k }  - \frac{1}{|G|} \sum_{g \in G} \chi(g^{-1}) c(k) e^{2\pi i k } \\
         \implies  \int  \cL \left(c(k)e^{2\pi i k } \right)dk &= \cF^{-1}(c) - \frac{1}{|G|} \sum_{g \in G}\chi(g^{-1})  \int c(k) e^{2\pi i k^T ( A(g) x + b(g))} dk  \\
         &= \cF^{-1}(c) - \frac{1}{|G|} \sum_{g \in G}\chi(g^{-1})  \int c(A(g^{-1})^T q) e^{2\pi i q^T A(g^{-1})b(g)} e^{2\pi i q^Tx} dq \\
         &= \cF^{-1}(c) - \frac{1}{|G|} \int \sum_{g \in G}\chi(g^{-1})   c(A(g^{-1})^T q) e^{2\pi i q^T A(g^{-1})b(g)} e^{2\pi i q^Tx} dq \\
         &= \cF^{-1} \left( c(q) - \frac{1}{|G|} \sum_{g \in G}\chi(g^{-1})   c(A(g^{-1})^T q) e^{2\pi i q^T A(g^{-1})b(g)}\right)
     \end{align*}
 Applying the $\cF$ on both sides, 
 \begin{align*}
      (\cL^\star c)(q) = \cF \left( \int  \cL \left(c(k)e^{2\pi i k } \right)dk \right)(q) &= c(q) - \frac{1}{|G|} \sum_{g \in G}\chi(g^{-1})   c(A(g^{-1})^T q) e^{2\pi i q^T A(g^{-1})b(g)}.
 \end{align*}
\end{proof}
\begin{remark} \label{lem:block}
    The operator $\cL^\star$ in \cref{eq:torus_sym_fourier_operator} is block diagonal: partitioning $\ZZ^n$ into the space of orbits $\ZZ^n/G$ under the induced group action on the dual Fourier space 
    \begin{equation}
        g \cdot k  = A(g^{-1})^T k,
    \end{equation}
    and writing $\CC^n[\ZZ^n]$ as a direct sum over the finite dimensional subspaces $\CC^n[\ZZ^n] = \bigoplus_{[k] \in \ZZ^n/G} \CC^n[[k]]$, the operator $\cL^\star$ can be written as a direct sum of finite dimensional matrices on each subspace $\CC^n[[k]]$.
\end{remark}

\begin{lemma} \label{lem:symmetry}
    Let $X$ be a set equipped with a $G$-action, where $G$ is a finite group. Consider also $\chi: G \to \GLin(n, \CC)$ a group homomorphism, such that $\chi(g \ast h) = \chi(g) \chi(h)$. Then a function $f: X \to \CC^n$ satisfies $f(g\cdot x) = \chi(g)f(x)$ for all $g \in G$, iff it is in the kernel of the following linear operator $\cL$ on $\CC^n$-valued functions on $X$:
    \begin{equation} \label{eq:general_symmetry_operator}
       \left(\cL f\right)(x) = f(x) - \frac{1}{|G|} \sum_{g \in G} \chi(g^{-1})f(g\cdot x).
    \end{equation}
\end{lemma}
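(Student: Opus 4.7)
The plan is to prove both implications of the \emph{iff} by direct calculation, relying on two elementary facts: that $\chi$ is a homomorphism (so $\chi(g^{-1})\chi(g) = \chi(e) = \mathrm{Id}$), and that summing over $G$ is invariant under right or left translation of the index (since $G$ is a group).

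First I would handle the forward direction. Assume $f(g\cdot x) = \chi(g)f(x)$ for every $g \in G$ and every $x \in X$. Substituting this into the expression for $(\cL f)(x)$, the sum becomes
\begin{align*}
\frac{1}{|G|}\sum_{g \in G}\chi(g^{-1})f(g\cdot x)
= \frac{1}{|G|}\sum_{g \in G}\chi(g^{-1})\chi(g)f(x)
= \frac{1}{|G|}\sum_{g \in G}f(x) = f(x),
\end{align*}
so $(\cL f)(x) = f(x) - f(x) = 0$, i.e.\ $f \in \ker \cL$.

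Next I would handle the reverse direction, which is the slightly more interesting half. Assume $\cL f = 0$, so that
\begin{align*}
f(x) = \frac{1}{|G|}\sum_{g \in G}\chi(g^{-1})f(g\cdot x) \quad \text{for all } x \in X.
\end{align*}
Fix $h \in G$ and evaluate this identity at $h\cdot x$:
\begin{align*}
f(h\cdot x) = \frac{1}{|G|}\sum_{g \in G}\chi(g^{-1})f(g\cdot h\cdot x).
\end{align*}
Now reindex by $g' = gh$, equivalently $g^{-1} = h\, g'^{-1}$; as $g$ ranges over $G$, so does $g'$. Using that $\chi$ is a homomorphism, $\chi(g^{-1}) = \chi(h)\chi(g'^{-1})$, giving
\begin{align*}
f(h\cdot x)
= \frac{1}{|G|}\sum_{g' \in G}\chi(h)\chi(g'^{-1})f(g'\cdot x)
= \chi(h) \cdot \frac{1}{|G|}\sum_{g' \in G}\chi(g'^{-1})f(g'\cdot x)
= \chi(h)f(x),
\end{align*}
where in the final step I invoke the kernel condition at $x$ again. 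Since $h$ was arbitrary, this is the desired equivariance.

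There is no real obstacle here: the lemma is essentially the statement that the averaging operator $A f(x) := \frac{1}{|G|}\sum_{g}\chi(g^{-1})f(g\cdot x)$ is a projection onto the $\chi$-equivariant functions, and $\cL = \mathrm{Id} - A$. The only point one needs to be careful with is the reindexing of the group sum in the reverse direction and the order of factors in $\chi(g^{-1}) = \chi(h)\chi(g'^{-1})$, which relies crucially on $\chi$ being a genuine (not anti-) homomorphism. Finiteness of $G$ is used only to make the average well-defined; the same argument works for any compact group with Haar measure in place of $\frac{1}{|G|}\sum_g$.
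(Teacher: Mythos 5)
Your proof is correct and follows essentially the same argument as the paper's: the forward direction by substituting the equivariance into the average, and the reverse direction by evaluating the kernel identity at $h\cdot x$, reindexing $g' = gh$ so that $\chi(g^{-1}) = \chi(h)\chi(g'^{-1})$, and applying the kernel condition once more. The closing remark that $\cL = \mathrm{Id} - A$ with $A$ a projection onto equivariant functions is a nice observation but not needed.
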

\begin{proof}
    Suppose $f(g\cdot x) = \chi(g)f(x)$. Then 
    \begin{align*}
        \sum_{g \in G} \chi(g^{-1})f(g\cdot x) = \sum_{g \in G} \chi(g^{-1})\chi(g)f(x) = \sum_{g \in G} \chi(g^{-1} g)f(x) = \sum_{g \in G} f(x) = |G| f(x).
    \end{align*}
    Thus $\left(\cL f\right)(x) = f(x) - \frac{1}{|G|} \sum_{g \in G} \chi(g^{-1})f(g\cdot x) = 0$, i.e. $f \in \ker \cL$.

    Conversely, if $f \in \ker \cL$, then for all $h \in G$,
    \begin{align*}
        f(h \cdot x)  &= \frac{1}{|G|}\sum_{g \in G} \chi(g^{-1})f(g\cdot h \cdot x) \\
                      &= \frac{1}{|G|}\sum_{g' \in G} \chi((g' h^{-1})^{-1})f(g' \cdot x) \\
                      &= \frac{1}{|G|}\sum_{g' \in G} \chi(h)\chi({g'}^{-1})f(g' \cdot x)\\
                       &= \chi(h) \left(\frac{1}{|G|}\sum_{g' \in G} \chi({g'}^{-1})f(g' \cdot x) \right) = \chi(h) f(x).
    \end{align*}
\end{proof}

\begin{remark} \label{rmk:laplacian}
    If $\chi(g) = 1$, then the operator $\cL$ in \cref{eq:general_symmetry_operator} can be interpreted as a \emph{graph Laplacian}. For each orbit  $x_1, \ldots, x_n$ , we construct a multigraph where the vertex set consists of the  $x_1, \ldots, x_n$;  For each group element $g \in G$, we attach an edge between every $x_i$ and $x_j$ such that $x_j = g \cdot x_i$. Edges can thus be unambigously indexed by $(x_i,g)$. While this graph is presented here as a purely combinatorial construction, we note that this graph can be viewed as a categorical object called the \emph{action groupoid} \cite{Brown1987FromSurvey}.  The \emph{coboundary operator} $d$ of the multigraph (interpreted as a cellular complex) sends vertex functions to an edge-valued function. Since the graph is complete, for any edge $(x_i,  g)$, the image of the coboundary operator evaluated at that edge is $df(x_i,  g) = f(x_j) - f(x_i) = f(g \cdot x_i) - f(x_j)$. Vertex functions in the kernel of the coboundary operator corresponds to functions that satisfy $f(g \cdot x) = f(x)$. Since the Laplacian is defined as $L = d^\dagger d$, we leave it as an exercise for the reader to verify that the Laplacian is proportional to $\cL$ in \cref{eq:general_symmetry_operator} up to a constant. 
    
    In the more general case where $\chi(g)$ is unitary for all $g$, the operator $\cL$ can be interpreted as a \emph{cellular sheaf graph Laplacian} \cite{Hansen2019TowardSheaves}, following a similar construction. 
\end{remark}

\end{document}